\documentclass[12pt,a4paper]{article}
\usepackage{amssymb,latexsym,amsmath}
\usepackage{float}              

\usepackage{epsfig}

\usepackage{subfig}
\usepackage{graphicx}

\usepackage{xcolor}

\input epsf

\setlength{\textwidth}{154mm} \setlength{\textheight}{21.1cm}
\setlength{\oddsidemargin}{5mm}

\newtheorem{prop}{Proposition}[section]
\newtheorem{theo}[prop]{Theorem}
\newtheorem{cor}[prop]{Corollary}
\newtheorem{ex}[prop]{Example}
\newtheorem{defn}[prop]{Definition}

\newenvironment{proof}
 {\begin{trivlist} \item[\hskip \labelsep {\bf Proof}\hspace*{3 mm}]}
 {\hfill$\Box$\end{trivlist}}


\begin{document}

\title{Curves in a spacelike hypersurface in the Minkowski space-time}
\author{Shyuichi Izumiya, Ana Claudia Nabarro\footnote{Supported by  FAPESP grant
 2016/19139-7} and  Andrea de Jesus Sacramento\footnote{Supported by  CNPq grant
 150469/2017-9}}

 \maketitle
\begin{abstract}
We define the hyperbolic surface  and the de Sitter surface of a curve in the spacelike hypersurface $M$ in the Minkowski $4$-space. These surfaces are respectively located   in the hyperbolic 3-space and in the de Sitter 3-space.  We use  techniques of the theory of singularities in order to describe  the generic shape of these surfaces and of their singular value sets. We also investigate geometric meanings of those singularities.
\end{abstract}

\renewcommand{\thefootnote}{\fnsymbol{footnote}}
\footnote[0]{2010 Mathematics Subject classification 58K05, 53D10, 53B30.}
\footnote[0]{Key Words and Phrases. Curves on a spacelike hypersurface, 
	Minkowski space-time, hyperbolic surface, de Sitter surface.}



\section{Introduction}\label{sec:intro}

Submanifolds in Lorentz-Minkowski space are investigated from various mathematical viewpoints  and  are of interest also in relativity theory. In recent years, using singularity theory, very important progress has been made and many investigations have been conducted  to  classify and characterize the singularity of submanifolds in Euclidean spaces or in semi-Euclidean spaces (see, for example, \cite{bruce}-\cite{anaandrea} and \cite{satopseudo}).  The results in this paper contribute to the study of the extrinsic geometry of curves in different ambient spaces.

We consider a spacelike embedding $X:U\rightarrow\mathbb{R}^4_1$ from an open subset $U\subset\mathbb{R}^3$ and identify $M$ and $U$ through the embedding $X$, where $\mathbb{R}^4_1$ is the  Minkowski $4$-space. For a curve  $\gamma:I\rightarrow M$ with nowhere vanishing curvature, we define a hyperbolic surface in hyperbolic space $H^{3}(-1)$ and a de Sitter  surface in the de Sitter space $S^{3}_1$ associated to the curve $\gamma$. For the study of the generic differential geometry  of these surfaces and of their singular  sets, we use singularity theory techniques,  and in particular, the classical deformation theory.

Our paper is organized as follows: In Section \ref{sec:pre}, we  review basic definitions  for the Minkowski 4-space and construct a  moving frame along $\gamma$ together with Frenet-Serret type formulae. We also review the definition of the $A_k$-singularities and discriminant sets.
 In  Sections  \ref{sectionht} and \ref{sectionhs}, we define two  families of height functions on $\gamma$, which are timelike tangential height functions and  spacelike tangential height functions. These functions measures the contact of the curve $t$ with special hyperplanes.  Differentiating these functions yield invariants related to each surface.  We show  that the hyperbolic surface of $\gamma$ is the discriminant set of the family of  timelike tangential height functions (Corollary \ref{coro:discsuphiper}) and the de Sitter surface of $\gamma$ is the discriminant set of the family of spacelike tangential height  functions (Corollary \ref{coro:discsupdesitter}). Furthermore, using the theory of deformations,  we  give a classification  and a  characterization of the diffeomorphims type  of these surfaces (Theorems \ref{teo:classificacao} and \ref{teo:classificacao2}).
We  also investigate the geometric meaning  of these invariants.  We prove  results that give conditions (related to these invariants) for the curve $\gamma$ to be part of a slice surface (Propositions \ref{pro:slice1} and \ref{pro:slice2}). When $\gamma$ is not part of a slice surface, we characterize the contact of $\gamma$ with a slice surface using the singularity types of its hyperbolic surface (Proposition \ref{prop:contato1} ) and the singularity types of  its de Sitter surface (Proposition \ref{prop:contato2}).  In  Sections  \ref{sec:exh3} and \ref{sec:exas3}, we consider examples of curves on spacelike hypersurface in $\mathbb{R}^4_1$ and we obtain the  surfaces studied in \cite{izumyiafs}.

\section{Preliminaries}\label{sec:pre}
 The \emph{Minkowski space} $\mathbb{R}^{4}_1$ is the vector space $\mathbb{R}^{4}$ endowsed with the pseudo-scalar product $\langle x,y\rangle=-x_0y_0 + x_1y_1 + x_{2}y_{2}+x_{3}y_{3},$ for any $x=(x_0,x_1,x_{2},x_3)$ and $y=(y_0,y_1,y_{2},y_3)$ in $\mathbb{R}^{4}_1$ (see, e.g., \cite{ratcliffe}). We say that a non-zero vector $x\in\mathbb{R}^{4}_1$ is \emph{spacelike} if $\langle x,x\rangle > 0$, \emph{lightlike} if $\langle x,x\rangle = 0$ and
 \emph{ timelike} if $\langle x,x\rangle < 0$, respectively. We say that $\gamma:I \rightarrow \mathbb{R}^4_1$, with $I\subset \mathbb{R}$ an open interval, is \emph{spacelike} (resp. \emph{timelike}) if the tangent vector $\gamma'(t)$ is a \emph{spacelike }(resp. \emph{timelike}) vector for any $t\in I$.
The norm of a vector $x\in\mathbb{R}^{4}_1$ is defined by $\parallel x\parallel=\sqrt{\mid\langle x,x\rangle\mid}$. For a non-zero vector $v\in\mathbb{R}^{4}_1$ and a real number $c$, we define a \emph{hyperplane} with \emph{pseudo-normal} $v$ by $$HP(v,c)=\{x\in\mathbb{R}^{4}_1\,\,|\,\,\langle x,v\rangle=c\}.$$ We call $HP(v,c)$ a \emph{spacelike hyperplane}, a \emph{timelike hyperplane} or a \emph{lightlike hyperplane} if $v$ is timelike, spacelike or lightlike, respectively. We now consider the pseudo-spheres in $\mathbb{R}^{4}_1$: The \emph{hyperbolic 3-space} is defined  by
 $$H^{3}(-1)=\{x\in\mathbb{R}^{4}_1\,\,|\,\,\langle x,x\rangle=-1\}$$ and the de \emph{Sitter 3-space} by
 $$S^{3}_1=\{x\in\mathbb{R}^{4}_1\,\,|\,\,\langle x,x\rangle=1\}.$$

 For any $x=(x_0,x_1,x_2,x_3)$, $y=(y_0,y_1,y_2,y_3)$, $z=(z_0,z_1,z_2,z_3)\in\mathbb{R}^{4}_1$, the pseudo vector product of $x$, $y$ and $z$ is defined as follows:
 $$x\wedge y\wedge z=\left|\begin{array}{cccc}
                                                                                        -e_0 & e_1 & e_2 & e_3 \\
                                                                                        x_0 & x_1 & x_2 & x_3 \\
                                                                                        y_0 & y_1 & y_2 & y_3\\
                                                                                        z_0 & z_1 & z_2 & z_3
                                                                                      \end{array}\right|,$$
where $\{ e_0, e_1, e_2,e_3\}$ is the canonical basis of $\mathbb{R}^{4}$.

 We consider a spacelike embedding $X:U\rightarrow\mathbb{R}^4_1$ from an open subset $U\subset\mathbb{R}^3$. We write  $M=X(U)$ and identify $M$ and $U$ through the embedding $X$. We say that $X$ is a \emph{spacelike embedding} if the tangent space $T{_p}M$ consists of spacelike vectors at any $p=X(u)$. Let $\bar{\gamma}:I\rightarrow U$ be a regular curve. Then we have a curve $\gamma:I\rightarrow M\subset\mathbb{R}^4_1$ defined by $\gamma(s)=X(\bar{\gamma}(s))$. We say that $\gamma$ is a \emph{curve in the spacelike hypersurface $M$}. Since $\gamma$ is a spacelike curve, we can reparametrize it by the arc length s,  then
 we have the unit tangent vector  $t(s)=\gamma'(s)$. In this case, we call $\gamma$ a {\it unit speed spacelike curve}. Since $X$ is a spacelike embedding, we have a unit timelike normal vector field $n$ along $M=X(U)$ defined by $$n(p)=\frac{X_{u_1}(u)\wedge X_{u_2}(u)\wedge X_{u_3}(u)}{\parallel X_{u_1}(u)\wedge X_{u_2}(u)\wedge X_{u_3}(u)\parallel}$$ for $p=X(u)$, where $X_{u_i}={\partial X}/{\partial u_i}$, $i=1,2,3$. We say that $n$ is \emph{future directed} if $\langle n,e_0\rangle<0$. We choose the orientation of $M$ such that $n$ is future directed. We define $n_{\gamma}(s)=n\circ\gamma(s)$, so that we have a unit timelike normal vector field $n_{\gamma}$ along $\gamma$. Under the assumption that $\parallel \langle n_{\gamma}(s),t'(s)\rangle n_{\gamma}(s)+t'(s)\parallel\neq0$, we define $$n_1(s)=\dfrac{\langle n_{\gamma}(s),t'(s)\rangle n_{\gamma}(s)+t'(s)}{\parallel \langle n_{\gamma}(s),t'(s)\rangle n_{\gamma}(s)+t'(s)\parallel}.$$
It follows that $\langle t,n_1\rangle=0$ and $\langle n_{\gamma},n_1\rangle=0$. Therefore, we have a spacelike unit vector defined by $n_2(s)=n_{\gamma}\wedge t(s)\wedge n_1(s)$. Then, we have a pseudo-orthonormal frame $\{n_{\gamma}, t(s), n_1(s), n_2(s)\}$, which is called a \emph{Lorentzian Darboux frame} along $\gamma$. By standard arguments, the Frenet-Serret type formulae for the above frame are given by
$$
\left\{
  \begin{aligned}
     n_{\gamma}'(s) &=k_n(s)\,t(s)+\tau_1(s)\,n_1(s)+\tau_2(s)\,n_2(s), \\
     t'(s) & =k_n(s)\,n_{\gamma}(s)+k_g(s)\,n_1(s),\\
 n_1'(s) &= \tau_1(s)\,n_{\gamma}(s)-k_g(s)\,t(s)+\tau_g(s)\,n_2(s),\\
  n_2'(s) &= \tau_2(s)\,n_{\gamma}(s)-\tau_g(s)\,n_1(s),
  \end{aligned}
\right.$$
\noindent where $k_n(s)=-\langle n_{\gamma}(s),t'(s)\rangle$, $\tau_1(s)=\langle n_1(s),n_{\gamma}'(s)\rangle$, $\tau_2(s)=\langle n_2(s),n_{\gamma}'(s)\rangle$, $k_g(s)=\parallel \langle n_{\gamma}(s),t'(s)\rangle n_{\gamma}(s)+t'(s)\parallel=\parallel -k_n(s)n_{\gamma}(s)+t'(s)\parallel$ and $\tau_g(s)=\langle -n_2'(s),n_1(s)\rangle$. The invariant $k_n$ is called  a normal curvature, $\tau_1$ a first normal  torsion, $\tau_2$ a second normal torsion, $k_g$ a geodesic curvature and $\tau_g$ a geodesic torsion.

By the assumption,  $k_g(s)=\parallel \langle n_{\gamma}(s),t'(s)\rangle n_{\gamma}(s)+t'(s)\parallel\neq0$, so that $k_g(s)>0$.

 \begin{defn}\label{def:contact}
 Let $F:\mathbb{R}^{4}_1\rightarrow \mathbb{R}$  be a submersion and $\gamma:I\rightarrow M$ be a regular curve. We say that $\gamma$ and $F^{-1}(0)$ have contact of order $k$ at $s_0$, if the function  $g(s)=F\circ\gamma(s)$ satisfies $g(s_0)=g'(s_0)=\cdots=g^{(k)}(s_0)=0$ and $g^{(k+1)}(s_0)\neq0$, i.e., $g$ has an $A_k$-singularity at $s_0$.
\end{defn}

Let $G:\mathbb{R}\times\mathbb{R}^r,(s_0,x_0)\rightarrow\mathbb{R}$ be a family of germs of functions. We call $G$ an $r$-parameter deformation of $f$ if $f(s)=G_{x_0}(s)$. Suppose that $f$ has an $A_k$-singularity $(k\geq1)$ at $s_0$. We write  $$j^{(k-1)}(\frac{\partial G}{\partial x_i}(s,x_0))(s_0)=\sum_{j=0}^{k-1}\alpha_{ji}(s-s_0)^j,$$ for $i=1,\ldots, r$. Then $G$ is  a \emph{versal deformation} if the $k\times r$ matrix of coefficients $(\alpha_{ji})$ has rank $k$ $(k\leq r)$ (see \cite{bruce}).

 The \emph{discriminant set} of $G$ is the set
 $$\mathcal{D}_{G}=\left\{x\in\left(\mathbb{R}^r,x_0\right) \,\,\mid \,\,G=\frac{\partial G}{\partial s}=0\,\, at\,\, (s,x)\,\,for\,\,some\,\, s\in(\mathbb{R},s_0)\right\}$$ and the \emph{ bifurcation set} of $G$ is $$\mathcal{B}_{G}=\left\{x\in\left(\mathbb{R}^r,x_0\right) \,\,\mid \,\,\frac{\partial G}{\partial s}=\frac{\partial^2 G}{\partial s^2}=0\,\, at\,\, (s,x)\,\,for\,\,some\,\, s\in(\mathbb{R},s_0)\right\}.$$

\begin{theo}\cite{bruce}\label{teobruce1}
Let $G:\mathbb{R}\times\mathbb{R}^r,(s_0,x_0)\rightarrow\mathbb{R}$ be an $r$-parameter deformation of $f$ such that $f$ has an  $A_{k}$-singularity at $s_0$. Suppose that G is a versal deformation. Then $\mathcal{D}_G$ is locally  diffeomorphic to
\begin{itemize}
\item[(1)] $C\times\mathbb{R}^{r-2}$ if $k=2$,
\item[(2)]  $SW\times \mathbb{R}^{r-3}$ if $k=3$,
\end{itemize}
where $C=\{(x_1,x_2)\,\mid\,x_1^2=x_2^3\}$ is the ordinary cusp and $SW=\{(x_1,x_2,x_3)\,\mid\,x_1=3u^4+u^2v,x_2=4u^3+2uv,x_3=v\}$ is the swallowtail surface.
\end{theo}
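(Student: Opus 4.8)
The plan is to reduce the given versal deformation $G$ to a standard polynomial model and then compute the discriminant of that model directly, finally splitting off the superfluous parameters. First I would use the classification of $A_k$-singularities of functions of one variable: since $f$ has an $A_k$-singularity at $s_0$, there is a germ of diffeomorphism in the source carrying $f$ to the normal form $s\mapsto \pm s^{k+1}$ (after translating $s_0$ to the origin); note that the overall sign is irrelevant for the discriminant, since $G=G_s=0$ and $-G=-G_s=0$ cut out the same set. The $\mathcal{K}$-miniversal deformation of this germ is the polynomial unfolding
$$F(s,u_1,\dots,u_k)=s^{k+1}+u_k s^{k-1}+\cdots+u_2 s+u_1,$$
which has exactly $k$ parameters, matching the $\mathcal{K}_e$-codimension of the singularity.

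The central step is to show that $G$ is isomorphic, as an unfolding, to $F$ with $r-k$ extra parameters on which it depends trivially. Here I would invoke the fundamental theorem of unfolding theory. Since the discriminant set is an invariant of contact ($\mathcal{K}$) equivalence, the relevant notion is $\mathcal{K}$-versality, and the rank-$k$ condition imposed on the coefficient matrix $(\alpha_{ji})$ in the statement is precisely the criterion of \emph{infinitesimal} $\mathcal{K}$-versality: the $(k-1)$-jets of the $\partial G/\partial x_i$ must span the $k$-dimensional quotient $\mathcal{E}_s/T\mathcal{K}_e f$, whose basis is represented by $1,s,\dots,s^{k-1}$. By the Malgrange preparation theorem, infinitesimal versality implies versality, and any two versal unfoldings of the same germ are isomorphic after adjoining trivially-dependent parameters. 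Consequently there is a germ of diffeomorphism of the parameter space $(\mathbb{R}^r,x_0)$ carrying $\mathcal{D}_G$ onto $\mathcal{D}_F\times\mathbb{R}^{r-k}$. I expect this reduction to be the main obstacle, as it rests on the full machinery of versal unfolding theory and the preparation theorem rather than on any elementary manipulation.

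It then remains to compute $\mathcal{D}_F$ for $k=2$ and $k=3$, which is routine. For $k=2$, from $F=s^3+u_2 s+u_1$ I would solve $F=F_s=0$, obtaining $u_2=-3s^2$ and $u_1=2s^3$; eliminating $s$ gives $4u_2^3+27u_1^2=0$, a curve diffeomorphic to the ordinary cusp $C=\{x_1^2=x_2^3\}$ via $x_1=u_1/2=s^3$, $x_2=-u_2/3=s^2$. Hence $\mathcal{D}_G\cong C\times\mathbb{R}^{r-2}$. For $k=3$, from $F=s^4+u_3 s^2+u_2 s+u_1$ the conditions $F=F_s=0$ give, with $s$ and $u_3$ as free parameters, $u_2=-(4s^3+2u_3 s)$ and $u_1=3s^4+u_3 s^2$; under the substitution $(u,v)=(s,u_3)$ this is exactly the swallowtail parametrization $x_1=3u^4+u^2v$, $x_2=4u^3+2uv$, $x_3=v$ up to the sign change $x_2\mapsto -x_2$, so $\mathcal{D}_F\cong SW$ and therefore $\mathcal{D}_G\cong SW\times\mathbb{R}^{r-3}$. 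Combining the reduction step with these two explicit computations yields the two cases of the theorem.
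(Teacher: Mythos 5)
The paper does not actually prove this theorem: it is quoted, with the citation, from Bruce--Giblin's \emph{Curves and Singularities}, so the only meaningful comparison is with the standard proof in that reference. Your proposal is precisely that standard argument --- recognizing the rank condition on $(\alpha_{ji})$ as infinitesimal $\mathcal{K}$-versality (the right equivalence for discriminants), invoking the versality and uniqueness theorems to reduce $G$ to the polynomial miniversal unfolding $s^{k+1}+u_ks^{k-1}+\cdots+u_1$ together with $r-k$ trivial parameters, and then computing the model discriminants directly --- and it is correct: the sign issue for $\pm s^{k+1}$ is handled properly, the passage from an unfolding isomorphism to a parameter-space diffeomorphism carrying $\mathcal{D}_G$ to $\mathcal{D}_F\times\mathbb{R}^{r-k}$ is sound, and your cusp ($4u_2^3+27u_1^2=0$) and swallowtail parametrization computations check out exactly against the normal forms $C$ and $SW$ in the statement.
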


In  Sections \ref{sectionht} and \ref{sectionhs}, we use special  families of  functions on curves in $M$ to study  the hyperbolic surface and   the de Sitter surface.  In fact, these surfaces are the discriminant sets of these families.   

\section{Timelike tangential height functions}\label{sectionht}

In this section, we introduce the  family of timelike tangential height functions  on a curve in a spacelike hypersurface $M$. Furthermore, we define and study the hyperbolic surface which is  given by the discriminant set of this family.

We define a family of functions on a curve $\gamma:I\rightarrow M\subset\mathbb{R}^4_1$ as follows:
$$H^{T}_t:I\times H^3(-1)\rightarrow \mathbb{R};\,\,\,\,(s,v)\mapsto\langle t(s),v\rangle.$$
We call  $H^{T}_t$ a \emph{family of timelike tangential height functions of $\gamma$}. We denote $({h^{T}_t})_{v}(s)=H^{T}_t(s,v)$ for any fixed $v\in H^3(-1)$. The family $H^{T}_t$ measures the contact of the curve $t$ with spacelike hyperplanes in $\mathbb{R}^4_1$. Generically, this contact can be  of order $k$, $k=1,2,3$.

The conditions that  characterize the $A_k$-singularity, $k=1,2,3$  can be obtained in Proposition \ref{singht}.

Observe that by the proof of (2) in the following proposition, we have  $k_g^2(s)> k_n^2(s)$. So we assume that there exist an interval $I$ such that  $k_g^2(s)> k_n^2(s)$ for $s\in I$. Furthermore, in order to avoid complicated situations, we assume that $(k_n\tau_2+k_g\tau_g)(s)\neq0$ for any $s\in I$.

\begin{prop}\label{singht}
Let $\gamma:I\rightarrow M$ be a  unit speed curve with $k_g(s)\neq0$ and $(k_n\tau_2+k_g\tau_g)(s)\neq0$. Then, we have the following:
\begin{itemize}
\item[(1)] ${({h^{T}_t})}_{v}(s)=0$ if and only if there exist $\mu$, $\lambda$, $\eta \in\mathbb{R}$ such that $-\mu^2+\lambda^2+\eta^2=-1$ and  $v=\mu n_{\gamma}(s)+\lambda n_{1}(s)+\eta n_2(s)$.

\item[(2)] ${({h^{T}_t})}_{v}(s)={({h^{T}_t})}_{v}'(s)=0$ if and only if  there exists $\theta \in\mathbb{R}$ such that $$v=\frac{\cosh \theta}{\sqrt{k_g^2(s)-k_n^2(s)}}\left(k_g(s)n_{\gamma}(s)+k_n(s)n_1(s)\right)+\sinh \theta n_2(s).$$

\item[(3)] ${({h^{T}_t})}_{v}(s)={({h^{T}_t})}_{v}'(s)={({h^{T}_t})}_{v}''(s)=0$ if and only if $$v=\frac{\cosh \theta}{\sqrt{k_g^2(s)-k_n^2(s)}}\left(k_g(s)n_{\gamma}(s)+k_n(s)n_1(s)\right)+\sinh \theta n_2(s)$$ and $\tanh \theta=\dfrac{k_gk'_n+k_g^2\tau_1-k_n^2\tau_1-k_nk'_g}{(k_n\tau_2+k_g\tau_g)\sqrt{k_g^2-k_n^2}}(s)$.

\item[(4)]  ${({h^{T}_t})}_{v}(s)={({h^{T}_t})}_{v}'(s)={({h^{T}_t})}_{v}''(s)={({h^{T}_t})}_{v}^{'''}(s)=0$ if and only if $$v=\frac{\cosh \theta}{\sqrt{k_g^2(s)-k_n^2(s)}}\left(k_g(s)n_{\gamma}(s)+k_n(s)n_1(s)\right)+\sinh \theta n_2(s),$$ $\tanh \theta=\dfrac{k_gk'_n+k_g^2\tau_1-k_n^2\tau_1-k_nk'_g}{(k_n\tau_2+k_g\tau_g)\sqrt{k_g^2-k_n^2}}(s)$ and $\rho(s)=0$, where

    $\rho(s)=((-k_gk''_n-k_gk_n\tau_2^2-2k_gk'_g\tau_1-k_g^2\tau'_1-k_g^2\tau_g\tau_2+2k_nk'_n\tau_1+k_n^2\tau'_1-k_n^2k_g\tau_2+k''_gk_n-k_gk_n\tau_g^2)
    (k_n\tau_2+k_g\tau_g)+(k_gk'_n+k_g^2\tau_1-k_n^2\tau_1-k_nk'_g)(2k'_n\tau_2+k_n\tau_1\tau_g+k_n\tau'_2+2k'_g\tau_g+k_g\tau_1\tau_2+k_g\tau'_g))(s)$.

    \item[(5)]  ${({h^{T}_t})}_{v}(s)={({h^{T}_t})}_{v}'(s)={({h^{T}_t})}_{v}''(s)={({h^{T}_t})}_{v}^{'''}(s)={({h^{T}_t})}_{v}^{(4)}(s)=0$ if and only if $$v=\frac{\cosh \theta}{\sqrt{k_g^2(s)-k_n^2(s)}}\left(k_g(s)n_{\gamma}(s)+k_n(s)n_1(s)\right)+\sinh \theta n_2(s),$$ $\tanh \theta=\dfrac{k_gk'_n+k_g^2\tau_1-k_n^2\tau_1-k_nk'_g}{(k_n\tau_2+k_g\tau_g)\sqrt{k_g^2-k_n^2}}(s)$ and  $\rho(s)=\rho'(s)=0$.
\end{itemize}
\end{prop}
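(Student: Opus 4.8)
The plan is to differentiate $(h^{T}_t)_v(s)=\langle t(s),v\rangle$ repeatedly, using that $v$ is a constant vector, so that $(h^{T}_t)_v^{(j)}(s)=\langle t^{(j)}(s),v\rangle$, and to rewrite each $t^{(j)}$ in the Lorentzian Darboux frame $\{n_{\gamma},t,n_1,n_2\}$ by iterating the Frenet--Serret formulae. It is convenient to record the four scalar functions $a=\langle n_{\gamma},v\rangle$, $b=\langle t,v\rangle$, $c=\langle n_1,v\rangle$, $d=\langle n_2,v\rangle$; the frame formulae turn differentiation into bookkeeping via the linear system $a'=k_n b+\tau_1 c+\tau_2 d$, $b'=k_n a+k_g c$, $c'=\tau_1 a-k_g b+\tau_g d$, $d'=\tau_2 a-\tau_g c$. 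Since the frame is pseudo-orthonormal with $n_{\gamma}$ timelike, writing $v$ in it gives $v=-a\,n_{\gamma}+b\,t+c\,n_1+d\,n_2$, and the membership $v\in H^3(-1)$ reads $-a^2+b^2+c^2+d^2=-1$.

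For (1) one has $(h^{T}_t)_v=b=\langle t,v\rangle$, so $(h^{T}_t)_v=0$ if and only if $b=0$; setting $\mu=-a$, $\lambda=c$, $\eta=d$ and using the $H^3(-1)$ constraint gives exactly the stated form with $-\mu^2+\lambda^2+\eta^2=-1$. For (2) I would add $(h^{T}_t)_v'=\langle t',v\rangle=k_n a+k_g c=0$. Geometrically $v$ is then orthogonal to both $t$ and $t'=k_n n_{\gamma}+k_g n_1$, hence lies in the orthogonal complement of the plane spanned by $t$ and $t'$, which is spanned by $n_2$ and the vector $k_g n_{\gamma}+k_n n_1$. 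Since $\langle k_g n_{\gamma}+k_n n_1,\,k_g n_{\gamma}+k_n n_1\rangle=k_n^2-k_g^2$, for this plane to meet $H^3(-1)$ the vector $k_g n_{\gamma}+k_n n_1$ must be timelike, forcing $k_g^2>k_n^2$ (this is the origin of the standing assumption); intersecting the plane with $H^3(-1)$ then yields precisely the stated hyperbola parametrized by $\theta$.

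The remaining parts iterate this scheme. For (3) I would impose $(h^{T}_t)_v''=\langle t'',v\rangle=0$; expanding $t''$ in the frame and substituting the values $a=-\cosh\theta\,k_g/\sqrt{k_g^2-k_n^2}$, $c=\cosh\theta\,k_n/\sqrt{k_g^2-k_n^2}$, $d=\sinh\theta$ coming from (2), the condition collapses to a single linear equation in $\cosh\theta$ and $\sinh\theta$ whose solution is the stated formula for $\tanh\theta$, solvability being guaranteed by the hypothesis $(k_n\tau_2+k_g\tau_g)(s)\neq0$. For (4) I would impose $(h^{T}_t)_v'''=0$: writing $t''=(k_n^2-k_g^2)t+(k_n'+k_g\tau_1)n_{\gamma}+(k_n\tau_1+k_g')n_1+(k_n\tau_2+k_g\tau_g)n_2$, differentiating once more, and substituting the $\theta$-parametrization together with the relation from (3), one finds $(h^{T}_t)_v'''=\frac{\cosh\theta}{(k_n\tau_2+k_g\tau_g)\sqrt{k_g^2-k_n^2}}\,\rho(s)$, so that $(h^{T}_t)_v'''=0$ is equivalent to $\rho(s)=0$. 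For (5) I would differentiate once further; since $(h^{T}_t)_v^{(4)}=\tfrac{d}{ds}(h^{T}_t)_v'''$, carrying out the same substitutions and invoking $\rho(s)=0$ (from (4)) to annihilate the contribution of the derivative of the prefactor, the surviving terms reassemble into $\rho'(s)$, giving $(h^{T}_t)_v^{(4)}=0$ if and only if $\rho'(s)=0$.

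The conceptual content lies entirely in (1)--(2); the rest is systematic but increasingly heavy differentiation. I expect the main obstacle to be the algebra in (4) and (5): correctly expanding $t'''$ and $t^{(4)}$ in the frame and checking that, after the earlier conditions are imposed, the coefficients collapse to exactly $\rho$ and $\rho'$. To keep this under control I would carry the abbreviations $P=k_n^2-k_g^2$, $Q=k_n'+k_g\tau_1$, $R=k_n\tau_1+k_g'$, $S=k_n\tau_2+k_g\tau_g$ (together with their successive derivatives) throughout, so that each differentiation merely updates these coefficient functions rather than re-deriving everything from the frame.
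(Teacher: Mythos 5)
Your treatment of (1)--(3) coincides with the paper's own proof: the paper likewise expands $t''$ in the Darboux frame and solves $\langle t'',v\rangle=0$ for $\tanh\theta$, and your orthogonal-complement reading of (2), which identifies $k_g^2>k_n^2$ as the condition for $\{t,t'\}^{\perp}$ to meet $H^3(-1)$, is exactly the remark the authors make just before the proposition. For (4)--(5) the paper gives no argument at all (``laborious and long''), so your sketch goes beyond it, and your key formula is correct: writing $t''=Pt+Qn_\gamma+Rn_1+Sn_2$ and $t'''=\alpha_0t+\alpha_1n_\gamma+\alpha_2n_1+\alpha_3n_2$ in your notation, one has $\lambda_0=k_gQ-k_nR$, $\rho=S(k_n\alpha_2-k_g\alpha_1)+\lambda_0\alpha_3$, and $(h^{T}_t)_v'''=\cosh\theta\,\rho/\bigl(S\sqrt{k_g^2-k_n^2}\bigr)$ at a parameter where (1)--(3) hold. (Carrying this out you will get the term $-k_n^2\tau_2\tau_g$ where the paper's printed $\rho$ has $-k_n^2k_g\tau_2$; that appears to be a typo in the paper.)

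The genuine weak point is your justification of (5). The relation $(h^{T}_t)_v'''(s)=\mathrm{prefactor}(s)\,\rho(s)$ is \emph{not} an identity in $s$ for fixed $v$: it was obtained by substituting conditions (1)--(3), which for a fixed vector $v$ hold only at the isolated parameter $s_0$. Hence you may not differentiate it by the product rule; $\frac{d}{ds}$ of the left-hand side also produces terms coming from the evolution of the coefficients $a,c,d$ of the fixed $v$ (your own linear system $a'=k_nb+\tau_1c+\tau_2d$, etc.), and these are not accounted for by ``derivative of the prefactor times $\rho$'' --- one must check separately that they assemble into a further multiple of $\rho$, which then dies by (4). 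Your fallback (expand $t^{(4)}$ in the frame and substitute (1)--(4)) does work, but there is a clean repair that avoids the heavy algebra: conditions (1)--(3) hold \emph{identically in $s$} along the hyperbolic curve $CH_\gamma(s)=S_\gamma(s,\theta(s))$, so the identity $\langle t''(s),CH_\gamma(s)\rangle\equiv 0$ may legitimately be differentiated. Since $CH_\gamma'(s)=\rho(s)W(s)$ for an explicit nonvanishing vector field $W$ (this is the computation in the paper's Proposition \ref{pro:slice1}), one gets $\langle t''',CH_\gamma\rangle=-\rho\,\langle t'',W\rangle$ with $\langle t'',W\rangle=-\cosh\theta/\bigl(S\sqrt{k_g^2-k_n^2}\bigr)\neq 0$, which re-proves (4); differentiating once more and evaluating at $s_0$, where $\rho(s_0)=0$ forces $CH_\gamma'(s_0)=0$, every term involving $CH_\gamma'$ drops out and $\langle t^{(4)}(s_0),v_0\rangle=-\rho'(s_0)\,\langle t''(s_0),W(s_0)\rangle$, a nonzero multiple of $\rho'(s_0)$. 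That closes (5) rigorously.
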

\begin{proof}
By definition ${({h^{T}_t})}_{v}(s)=0$ if and only if $\langle t(s),v\rangle=0$. This is equivalent to
$v=\mu n_{\gamma}(s)+\lambda n_{1}(s)+\eta n_2(s)$, where $\mu$, $\lambda$, $\eta \in\mathbb{R}$ and  $-\mu^2+\lambda^2+\eta^2=-1$
so that (1) follows. For  (2), ${({h^{T}_t})}_{v}(s)={({h^{T}_t})}_{v}'(s)=0$ if and only if
 $v=\mu n_{\gamma}(s)+\lambda n_{1}(s)+\eta n_2(s)$ with $-\mu^2+\lambda^2+\eta^2=-1$  and $\langle t'(s),v\rangle= -\mu k_n+\lambda k_g=0$. This is equivalent to
 $$v=\frac{\cosh \theta}{\sqrt{k_g^2(s)-k_n^2(s)}}\left(k_g(s)n_{\gamma}(s)+k_n(s)n_1(s)\right)+\sinh \theta n_2(s).$$

For (3), ${({h^{T}_t})}_{v}(s)={({h^{T}_t})}_{v}'(s)={({h^{T}_t})}_{v}''(s)=0$ if and only if
 $$v=\frac{\cosh \theta}{\sqrt{k_g^2(s)-k_n^2(s)}}\left(k_g(s)n_{\gamma}(s)+k_n(s)n_1(s)\right)+\sinh \theta n_2(s) \,\,\,\mbox{and}\,\,\, \langle t''(s),v\rangle=0.$$ Since $t''(s)=(k_n^2(s)-k_g^2(s))t(s)+(k'_n(s)+k_g(s)\tau_1(s))n_{\gamma}(s)+(k_n(s)\tau_1(s)+k'_g(s))n_1(s)+(k_n(s)\tau_2(s)+k_g(s)\tau_g(s))n_2(s)$,
  the previous assertion is equivalent to  $$v=\frac{\cosh \theta}{\sqrt{k_g^2(s)-k_n^2(s)}}\left(k_g(s)n_{\gamma}(s)+k_n(s)n_1(s)\right)+\sinh \theta n_2(s)$$ and $\tanh \theta=\dfrac{k_gk'_n+k_g^2\tau_1-k_n^2\tau_1-k_nk'_g}{(k_n\tau_2+k_g\tau_g)\sqrt{k_g^2-k_n^2}}(s)$.

For realize the calculations of the items (4) and (5) we use the Frenet-Serret type formulae of $\gamma$.  As the calculations are laborious and long we omit the details here.
\end{proof}

Following Proposition \ref{singht}, we define   the invariant \\

$\rho(s)=((-k_gk''_n-k_gk_n\tau_2^2-2k_gk'_g\tau_1-k_g^2\tau'_1-k_g^2\tau_g\tau_2+2k_nk'_n\tau_1+k_n^2\tau'_1-k_n^2k_g\tau_2+k''_gk_n-k_gk_n\tau_g^2)
    (k_n\tau_2+k_g\tau_g)+(k_gk'_n+k_g^2\tau_1-k_n^2\tau_1-k_nk'_g)(2k'_n\tau_2+k_n\tau_1\tau_g+k_n\tau'_2+2k'_g\tau_g+k_g\tau_1\tau_2+k_g\tau'_g))(s)$\\\\ of the curve $\gamma$. We will study the geometric meaning of this invariant.

     Motivated by the  calculations of this proposition we define a surface and its singular locus. Let $\gamma:I\rightarrow M$ be a  unit speed curve with $k_g(s)\neq0$ and $(k_n\tau_2+k_g\tau_g)(s)\neq0$, a surface $S_{\gamma}:I\times \mathbb{R}\rightarrow H^3(-1)$ is defined  by
$$S_{\gamma}(s,\theta)=\frac{\cosh \theta}{\sqrt{k_g^2(s)-k_n^2(s)}}\left(k_g(s)n_{\gamma}(s)+k_n(s)n_1(s)\right)+\sinh \theta n_2(s).$$
We call $S_{\gamma}$ a \emph{ hyperbolic surface} of $\gamma$. Since we assume  that $k_g^2(s)>k_n^2(s)$ for any $s\in I$, the hyperbolic surface exists. We now define $CH_{\gamma}=S_{\gamma}(s,\theta(s))$, where $\tanh\theta(s)=\dfrac{k_gk'_n+k_g^2\tau_1-k_n^2\tau_1-k_nk'_g}{(k_n\tau_2+k_g\tau_g)\sqrt{k_g^2-k_n^2}}(s)$. This is generically a curve. We call  $CH_{\gamma}$ a \emph{hyperbolic curve} of $\gamma$.  By  Theorem \ref{teo:classificacao} (1), this curve is the locus of the singular points of the hyperbolic surface of $\gamma$.

\begin{cor}\label{coro:discsuphiper}
 The hyperbolic surface of  $\gamma$ is the discriminant set $\mathcal{D}_{H_t^T}$  of the family of timelike tangential height functions  $H^{T}_t$.
\end{cor}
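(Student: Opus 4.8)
The plan is to unwind the definition of the discriminant set and match it directly against Proposition \ref{singht}(2). By definition, a point $v\in H^3(-1)$ belongs to $\mathcal{D}_{H^T_t}$ precisely when there exists some $s\in I$ at which both $H^T_t(s,v)=0$ and $\dfrac{\partial H^T_t}{\partial s}(s,v)=0$. Since $H^T_t(s,v)=(h^T_t)_v(s)$ for fixed $v$, these two conditions are exactly $(h^T_t)_v(s)=(h^T_t)_v'(s)=0$.

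First I would invoke Proposition \ref{singht}(2), which asserts that the simultaneous vanishing $(h^T_t)_v(s)=(h^T_t)_v'(s)=0$ holds if and only if there exists $\theta\in\mathbb{R}$ with
$$v=\frac{\cosh \theta}{\sqrt{k_g^2(s)-k_n^2(s)}}\left(k_g(s)n_{\gamma}(s)+k_n(s)n_1(s)\right)+\sinh \theta\, n_2(s).$$
The right-hand side is nothing but $S_{\gamma}(s,\theta)$, the defining expression of the hyperbolic surface of $\gamma$. Thus the discriminant condition at a fixed $s$ is equivalent to $v$ lying on the curve $\theta\mapsto S_\gamma(s,\theta)$.

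Next I would combine these two observations by quantifying over $s$. A point $v$ lies in $\mathcal{D}_{H^T_t}$ if and only if there is a pair $(s,\theta)\in I\times\mathbb{R}$ with $v=S_{\gamma}(s,\theta)$; that is, $v\in\mathcal{D}_{H^T_t}$ if and only if $v$ is in the image of $S_\gamma$. Hence $\mathcal{D}_{H^T_t}$ coincides with the hyperbolic surface of $\gamma$, which is the assertion of the corollary.

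There is essentially no obstacle here: the computational content has already been packaged into Proposition \ref{singht}(2), and the corollary is a direct translation between the discriminant-set condition and the parametrization of $S_\gamma$. The only point requiring care is the bookkeeping of roles in the general deformation setup of Section \ref{sec:pre}: in the family $H^T_t$ the variable $v\in H^3(-1)$ plays the role of the deformation parameter (the ``$x$'' of the general definition), while $s$ is the singularity variable over which one quantifies. With this identification the discriminant set lives in $H^3(-1)$, which is exactly the ambient space of $S_\gamma$, so the equality of the two sets is meaningful and immediate.
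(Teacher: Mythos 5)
Your proposal is correct and follows exactly the paper's own argument: the paper proves Corollary \ref{coro:discsuphiper} by citing the definition of the discriminant set from Section \ref{sec:pre} together with Proposition \ref{singht}(2), which is precisely the translation you carry out. You merely make explicit the quantification over $s$ and the identification of $v\in H^3(-1)$ as the deformation parameter, which the paper leaves implicit.
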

\begin{proof}
The proof follows from the definition of the discriminant set given in the Section \ref{sec:pre} and by  Proposition  \ref{singht} (2).
\end{proof}

    In the following proposition, we show that the family of timelike tangential height functions on a curve in $M$ is a versal deformation of an $A_k$-singularity, $k=2,3$, of its members.  Furthermore, we will study the geometric meaning of the  invariant $\rho$.  We write $\lambda_0(s)=\left(k_gk'_n+k_g^2\tau_1-k_n^2\tau_1-k_nk'_g\right)(s)$.

\begin{prop}\label{desdobramentohtT}
Let $\gamma:I\rightarrow M$ be a unit speed curve  with $k_g(s)\neq0$ and $(k_n\tau_2+k_g\tau_g)(s)\neq0$.
 \begin{itemize}
   \item [(a)] If $(h^{T}_t)_{v_0}$ has an $A_2$-singularity at $s_0$, then $H^{T}_t$ is a versal deformation of $(h^{T}_t)_{v_0}$.
   \item [(b)]  If $(h^{T}_t)_{v_0}$ has an $A_3$-singularity at $s_0$ and $\lambda_0(s_0)\neq0$ (which is a generic condition), then $H^{T}_t$ is a versal deformation of $(h^{T}_t)_{v_0}$.
 \end{itemize}

\end{prop}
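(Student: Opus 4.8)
Since the assertion is a versality statement, the plan is to verify, for each member $(h^T_t)_{v_0}$, the jet–matrix criterion recalled in Section~\ref{sec:pre}. The parameter space of $H^T_t$ is $H^3(-1)$, a $3$-dimensional manifold, so $r=3$. I would first fix the singular point $v_0\in H^3(-1)$ and choose local coordinates $(x_1,x_2,x_3)$ on $H^3(-1)$ near $v_0$, writing $v=v(x_1,x_2,x_3)$. Because $H^T_t(s,v)=\langle t(s),v\rangle$ is linear in $v$, one has $\partial H^T_t/\partial x_i=\langle t(s),\partial v/\partial x_i\rangle$, so the entries of the versality matrix are $\alpha_{ji}=\tfrac{1}{j!}\langle t^{(j)}(s_0),\partial_i v\rangle$. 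Thus for an $A_2$-singularity I must show the $2\times3$ matrix $(\alpha_{ji})_{j=0,1}$ has rank $2$, and for an $A_3$-singularity that the $3\times3$ matrix $(\alpha_{ji})_{j=0,1,2}$ has rank $3$.

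The key observation making this tractable is a coordinate-free reformulation of the rank condition. The vectors $\partial_1 v,\partial_2 v,\partial_3 v$ form a basis of $T_{v_0}H^3(-1)=v_0^{\perp}$, and since the pseudo-scalar product is nondegenerate with $\langle v_0,v_0\rangle=-1\neq0$, one has $(v_0^{\perp})^{\perp}=\mathbb{R}v_0$. Hence a combination $\sum_j c_j\langle t^{(j)}(s_0),\cdot\rangle$ vanishes on $v_0^{\perp}$ precisely when $\sum_j c_j t^{(j)}(s_0)\in\mathbb{R}v_0$. Consequently the versality matrix has maximal rank $k$ if and only if $t(s_0),t'(s_0),\dots,t^{(k-1)}(s_0),v_0$ are linearly independent in $\mathbb{R}^4_1$. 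This reduces (a) to the independence of $\{t,t',v_0\}$ and (b) to that of $\{t,t',t'',v_0\}$ at $s_0$.

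To check these I would expand everything in the Lorentzian Darboux frame $\{t,n_\gamma,n_1,n_2\}$, using the Frenet--Serret formulae and the expression for $t''$ already obtained in the proof of Proposition~\ref{singht}(3). Writing $v_0=S_\gamma(s_0,\theta)$ as in Proposition~\ref{singht}(2), the vectors have frame components $t=(1,0,0,0)$, $t'=(0,k_n,k_g,0)$, $t''=(k_n^2-k_g^2,\,k_n'+k_g\tau_1,\,k_n\tau_1+k_g',\,k_n\tau_2+k_g\tau_g)$ and $v_0=(0,Ak_g,Ak_n,\sinh\theta)$ with $A=\cosh\theta/\sqrt{k_g^2-k_n^2}$. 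For (a), the matrix with rows $t,t',v_0$ has rank $3$: $t$ is clearly independent of the other two, while $t'$ and $v_0$ are proportional only if $k_n^2=k_g^2$, which the standing assumption $k_g^2>k_n^2$ excludes; so no extra hypothesis is needed. For (b), since the first row is $(1,0,0,0)$ the $4\times4$ determinant reduces to a $3\times3$ one, which I expect to equal $B\cosh\theta-\lambda_0\sinh\theta$ at $s_0$, where $B=(k_n\tau_2+k_g\tau_g)\sqrt{k_g^2-k_n^2}$. Substituting the $A_3$-value $\tanh\theta=\lambda_0/B$ from Proposition~\ref{singht}(3) turns this into $\cosh\theta\,(B^2-\lambda_0^2)/B$; the hypotheses $(k_n\tau_2+k_g\tau_g)(s_0)\neq0$ and $\lambda_0(s_0)\neq0$ are exactly what I would invoke to secure its nonvanishing. (Note that the third-order condition $\rho(s_0)=0$ accompanying an $A_3$-singularity plays no role here, as only $2$-jets enter the versality matrix.)

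The conceptual steps—rewriting $\partial_i H^T_t$ and translating the rank condition into linear independence against $v_0$—are routine. I expect the main obstacle to be purely computational: carrying out the $4\times4$ determinant cleanly in the Darboux frame and simplifying it with the $A_3$-relation for $\theta$, keeping careful track of the factor $\sqrt{k_g^2-k_n^2}$ and the signs induced by the Minkowski metric. It is also worth remarking that, since $\tanh\theta$ necessarily lies in $(-1,1)$, the $A_3$-relation already forces $\lambda_0^2<B^2$, so the determinant $\cosh\theta\,(B^2-\lambda_0^2)/B$ is in fact nonzero and the stated hypothesis $\lambda_0(s_0)\neq0$ comfortably suffices. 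Once nonvanishing is established, versality follows from the criterion in Section~\ref{sec:pre}, and Theorem~\ref{teobruce1} then describes the discriminant $\mathcal{D}_{H^T_t}$.
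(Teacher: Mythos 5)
Your proposal is correct, and it reaches the conclusion by a genuinely different route from the paper. The paper works in the explicit chart $v=(\sqrt{1+v_1^2+v_2^2+v_3^2},v_1,v_2,v_3)$ of $H^3(-1)$ and attacks the rank conditions head-on: for (a) it forms the Euclidean Gram matrix of the two rows of the jet matrix, normalizes $n_{\gamma}(s_0)=(1,0,0,0)$ by a Lorentzian motion, and checks that the Gram determinant is positive; for (b) it rewrites $\det A$ as the pairing $\bigl\langle v/v_0,(\gamma'\wedge\gamma''\wedge\gamma''')(s_0)\bigr\rangle$ and evaluates that pseudo-scalar product. Your reduction --- the $k\times 3$ matrix $(\alpha_{ji})$ has rank $k$ precisely when $t(s_0),\dots,t^{(k-1)}(s_0),v_0$ are linearly independent in $\mathbb{R}^4_1$, because the kernel of $u\mapsto\langle u,\cdot\rangle|_{v_0^{\perp}}$ is exactly $\mathbb{R}v_0$ --- removes the chart, the Gram-matrix trick and the wedge identity all at once, treats (a) and (b) uniformly, and leaves a single determinant computation in the Darboux frame. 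That is a cleaner argument, and it buys something concrete, as the next point shows.

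In case (b) your determinant and the paper's do not agree, and yours is the correct one. The paper's proof ends with $\det A$ proportional to $\lambda_0(s_0)^2$, whereas you obtain (up to a harmless nonzero factor) $B\cosh\theta_0-\lambda_0\sinh\theta_0=\cosh\theta_0\,(B^2-\lambda_0^2)/B$ with $B=(k_n\tau_2+k_g\tau_g)\sqrt{k_g^2-k_n^2}$, which never vanishes since $|\tanh\theta_0|=|\lambda_0/B|<1$. A direct check confirms your version: take $s_0$ with $k_n=1$, $k_g=2$, $\tau_1=\tau_2=0$, $\tau_g=1$, $k_n'=k_g'=0$ (so $\lambda_0(s_0)=0$) and Darboux frame equal to the standard basis; then, in canonical coordinates, $t=(0,1,0,0)$, $t'=(1,0,2,0)$, $t''=(0,-3,0,2)$, $v_0=\tfrac{1}{\sqrt{3}}(2,0,1,0)$, and the paper's matrix is $\begin{pmatrix}1&0&0\\ 0&3/2&0\\ -3&0&2\end{pmatrix}$, whose determinant is $3\neq 0$, while the paper's closed formula predicts $0$. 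So your closing remark should be promoted to the statement itself: $H^{T}_t$ is versal at \emph{every} $A_3$-singularity, and the hypothesis $\lambda_0(s_0)\neq 0$ in (b) is superfluous. Be aware that this ripples through the paper: versality at all $A_3$-points forces a swallowtail there by Theorem \ref{teobruce1}, so the cuspidal beaks of Theorem \ref{teo:classificacao}(4) (and hence the purpose of Proposition \ref{prop:defnova}) cannot actually arise; consistently, on the singular set one has $\varphi_{\theta}=\cosh\theta\,(k_n\tau_2+k_g\tau_g)-\sinh\theta\,\lambda_0/\sqrt{k_g^2-k_n^2}\neq 0$, so that set is always a single smooth curve, never the two crossing branches asserted in the proof of that theorem.
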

\begin{proof}
The family of timelike tangential height functions  is given by $$H^{T}_t(s,v)=-v_0x'_0(s)+v_1x'_1(s)+v_2x'_2(s)+v_3x'_3(s),$$ where $v=(v_0,v_1,v_2,v_3)$, $t(s)=(x'_0(s),x'_1(s),x'_2(s),x'_3(s))$ and $v_0=\sqrt{1+v_1^2+v_2^2+v_3^2}$.

Thus $$\dfrac{\partial H^{T}_t}{\partial v_i}(s,v)=x'_i(s)-\dfrac{v_i}{v_0}x'_0(s),$$ for $i=1,2,3$. Therefore,  the $1$-jet of $\dfrac{\partial H^{T}_t}{\partial v_i}(s,v)$ at $s_0$ is given by $$x'_i(s_0)-\dfrac{v_i}{v_0}x'_0(s_0)+\left(x''_i(s_0)-\dfrac{v_i}{v_0}x''_0(s_0)\right)(s-s_0)$$

\noindent   and the $2$-jet of $\dfrac{\partial H^{T}_t}{\partial v_i}(s,v)$ at $s_0$ is given by $$x'_i(s_0)-\dfrac{v_i}{v_0}x'_0(s_0)+\left(x''_i(s_0)-\dfrac{v_i}{v_0}x''_0(s_0)\right)(s-s_0)+\dfrac{1}{2}\left(x'''_i(s_0)-
\dfrac{v_i}{v_0}x'''_0(s_0)\right)(s-s_0)^2.$$

We assume first that $(h^{T}_t)_v$ has an $A_2$-singularity at $s=s_0$. We show that the rank of the matrix

  $$B=\left(
                                                                                        \begin{array}{ccc}
x'_1(s_0)-\dfrac{v_1}{v_0}x'_0(s_0) &  x'_2(s_0)-\dfrac{v_2}{v_0}x'_0(s_0)  &  x'_3(s_0)-\dfrac{v_3}{v_0}x'_0(s_0)  \\
       x''_1(s_0)-\dfrac{v_1}{v_0}x''_0(s_0)  &  x''_2(s_0)-\dfrac{v_2}{v_0}x''_0(s_0)  & x''_3(s_0)-\dfrac{v_3}{v_0}x''_0(s_0)
                                                                                        \end{array}
                                                                                      \right)
.$$

\noindent is two.

We calculate the Gram-Schmidt matrix of  $\widetilde{B}={v_0}B$.  We denote the lines of $\widetilde{B}$ by

 $$F=(x'_1(s_0)v_0-x'_0(s_0)v_1,x'_2(s_0)v_0-x'_0(s_0)v_2,x'_3(s_0)v_0-x'_0(s_0) v_3 ),$$
 $$G=(x''_1(s_0)v_0-x''_0(s_0)v_1, x''_2(s_0)v_0-x''_0(s_0)v_2, x''_3(s_0)v_0-x''_0(s_0)v_3).$$

 Since $\langle v,v\rangle=-1$, $\langle t(s),t(s)\rangle=1,$ $\langle t(s),v\rangle=0,$  $\langle t'(s),v\rangle=0$ and $\langle t'(s),t'(s)\rangle=k_g^2(s)-k_n^2(s)$, we have  the following Euclidean  inner product
$$F.F=v^2_0-{(x'_0)}^2,\,\,\,\,F.G=-x'_0x''_0\,\,\,\,\mbox{and}\,\,\,\,G.G=v^2_0(k_g^2(s)-k_n^2(s))-{(x''_0)}^2.$$

Therefore the Gram-Schmidt matrix of $\widetilde{B}$ is $$G_{\widetilde{B}}=\left(
                                                            \begin{array}{cc}
                                                              v^2_0-{(x'_0)}^2 & -x'_0x''_0 \\
                                                              -x'_0x''_0 & v^2_0(k_g^2(s)-k_n^2(s))-{(x''_0)}^2 \\
                                                            \end{array}
                                                          \right).
$$
  By a Lorentzian motion of the curve, we can assume that $n_{\gamma}(s_0)=(1,0,0,0)$. In this case, we have $x'_0(s_0)=0$, $x''_0(s_0)=k_n(s_0)$ and $v_0=\dfrac{k_g(s_0)\cosh\theta_0}{\sqrt{k_g^2(s)-k_n^2(s)}}$. Thus the determinant  of $G_{\widetilde{B}}$ is $$v^2_0\left(k_g^2(s_0)-k_n^2(s_0)\right)\left(v^2_0-{(x'_0)}^2\right)-{v_0}^2{(x''_0)}^2=\dfrac{k_g^2(s_0)\cosh^2\theta_0}{{k_g^2(s_0)-k_n^2(s_0)}}\left(k_g^2(s_0)
  \cosh^2\theta_0-k_n^2(s_0)\right),$$ that is different from zero since $k_g^2(s_0)>k_n^2(s_0)$.  Thus the rank of the matrix B is equal to two and so the assertion (a) follows.

 We now assume that $(h^{T}_t)_v$ has an $A_3$-singularity at $s=s_0$. In this case, we show that the determinant of the $3\times3$ matrix
$$A=\left(
    \begin{array}{ccc}
      x'_1(s_0)-\dfrac{v_1}{v_0}x'_0(s_0) &  x'_2(s_0)-\dfrac{v_2}{v_0}x'_0(s_0)  &  x'_3(s_0)-\dfrac{v_3}{v_0}x'_0(s_0)  \\
       x''_1(s_0)-\dfrac{v_1}{v_0}x''_0(s_0)  &  x''_2(s_0)-\dfrac{v_2}{v_0}x''_0(s_0)  & x''_3(s_0)-\dfrac{v_3}{v_0}x''_0(s_0)  \\
       x'''_1(s_0)-\dfrac{v_1}{v_0}x'''_0(s_0) & x'''_2(s_0)-\dfrac{v_2}{v_0}x'''_0(s_0) & x'''_3(s_0)-\dfrac{v_3}{v_0}x'''_0(s_0) \\
    \end{array}
  \right)
$$
is nonzero. Denote  $$a=\left(
                                  \begin{array}{c}
                                    x'_0(s_0) \\
                                    x''_0(s_0) \\
                                    x'''_0(s_0) \\
                                  \end{array}
                                \right)
, b_i=\left(
   \begin{array}{c}
     x'_i(s_0)\\
     x''_i(s_0) \\
     x'''_i(s_0)\\
   \end{array}
 \right),
$$ for $i=1,2,3$. Then,  $$\det A=\dfrac{v_0}{v_0}\det (b_1\,\, b_2\,\,  b_3)
-\dfrac{v_1}{v_0}\det( a \,\, b_2 \,\, b_3) -\dfrac{v_2}{v_0}\det(  b_1\,\, a \,\, b_3 )-\dfrac{v_3}{v_0}\det ( b_1 \,\, b_2 \,\, a).
                                                               $$

On the other hand,

$$(\gamma'\wedge\gamma''\wedge\gamma''')(s_0)=(-\det (b_1\,\, b_2\,\,  b_3),-\det( a \,\, b_2 \,\, b_3),- \det(  b_1\,\, a \,\, b_3 ),-\det( b_1 \,\, b_2 \,\, a)).$$

Therefore,
\begin{align*}
\det A&=\left\langle\left(\dfrac{v_0}{v_0},\dfrac{v_1}{v_0},\dfrac{v_2}{v_0},\dfrac{v_3}{v_0}\right),(\gamma'\wedge\gamma''\wedge\gamma''')(s_0)\right\rangle\\
&=\dfrac{\cosh \theta_0 \left(k_gk'_n+k_g^2\tau_1-k_n^2\tau_1-k_nk'_g\right)^2}{v_0\sqrt{k_g^2-k_n^2}(k_n\tau_2+k_g\tau_g)}(s_0).
\end{align*}
 Therefore, if $(h^{T}_t)_{v_0}$ has an $A_3$-singularity  at $s_0$ and $\lambda_0(s_0)\neq0$, then $\det A\neq0$ and $H^{T}_t$ is a versal deformation of $(h^{T}_t)_{v_0}$. This completes the proof.

\end{proof}

By Proposition \ref{desdobramentohtT}, if  $(h^{T}_t)_{v_0}$ has an $A_3$-singularity at $s_0$ and  $\lambda_0(s_0)\neq0$  then $H^{T}_t$ is a versal deformation of $(h^{T}_t)_{v_0}$.  Now we investigate what is happening if  $\lambda_0(s_0)=0$.


First of all, we define a new deformation of $(h^{T}_t)_{v_0}$ and prove that it is a versal deformation. After, we use the Recognition Lemma for the cuspidal beaks or the cuspidal lips given in \cite{beaks}.

Using Proposition \ref{singht} with $\lambda_0(s_0)=0$, $(h^{T}_t)_{v_0}$ has an $A_3$-singularity at $s_0$ if and only if $\theta=0$, $v(s_0)=\dfrac{1}{\sqrt{k_g^2(s_0)-k_n^2(s_0)}}\left(k_gn_{\gamma}+k_nn_1\right)(s_0),$ $\rho(s_0)=0$ and $\rho'(s_0)\neq0$, where $\rho'(s_0)= \dfrac{-1}{\sqrt{k_g^2(s_0)-k_n^2(s_0)}}(-3\lambda_0'(s_0)\lambda_1(s_0)+\lambda_2(s_0))\neq0$, $\lambda_1(s)=(k'_n\tau_2+k_n\tau'_2+k'_g\tau_g+k_g\tau'_g)(s)$ and
$\lambda_2(s)=(k_gk'''_n+3k''_gk_g\tau_1+3k'_g\tau'_1k_g+k_g^2\tau''_1+k_g^2\tau_g\tau'_2-k_g^2\tau_g^2\tau_1-
k_n\tau_1\tau_2k_g^2+k_n\tau_1\tau_2k_g\tau_g-3k_nk''_n\tau_1-3k_nk'_n\tau'_1-k^2_n\tau''_1+k_nk'''_g+k_n^2\tau_1\tau_g^2+k_g^2\tau_1\tau_2^2-k_n^2\tau_1\tau_2^2+
k_n^2\tau_2\tau'_g-k_n^2\tau'_2\tau_g-k_g^2\tau'_g\tau_2+2\tau_1^2k'_nk_g-2\tau_1^2k'_gk_n)(s)$.

We now  define a deformation $\widetilde{H}:I\times H^3(-1)\times \mathbb{R}\rightarrow \mathbb{R}$ by $\widetilde{H}(s,v,u)=H^{T}_{t}(s,v)+u(s-s_0)^2=\langle t(s),v\rangle+u(s-s_0)^2.$ Here we consider the germ at $(s_0,v_0,0)$ represented by  $\widetilde{H}$.

\begin{prop}\label{prop:defnova}
If $(h^{T}_t)_{v_0}$  has an $A_3$-singularity at $s_0$ and $\lambda_0(s_0)=0$, then $\widetilde{H}$ is a  versal deformation of $(h^{T}_t)_{v_0}$.
\end{prop}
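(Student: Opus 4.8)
The plan is to verify directly the versality criterion recalled in Section~\ref{sec:pre}. Since $(h^{T}_t)_{v_0}$ has an $A_3$-singularity at $s_0$, I must show that the $3\times 4$ matrix of coefficients $(\alpha_{ji})$ obtained from the $2$-jets at $s_0$ of the partial derivatives of $\widetilde{H}$ with respect to the four deformation parameters $v_1,v_2,v_3,u$ has rank $3$ (here $k=3\leq r=4$). The first observation is that the added term $u(s-s_0)^2$ does not involve $v$, so $\partial\widetilde{H}/\partial v_i=\partial H^{T}_t/\partial v_i$ for $i=1,2,3$, and these three $2$-jets are exactly the ones already computed in the proof of Proposition~\ref{desdobramentohtT}. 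The fourth parameter contributes
$$\frac{\partial\widetilde{H}}{\partial u}(s,v,u)=(s-s_0)^2,$$
whose $2$-jet at $s_0$ is $(s-s_0)^2$; thus it supplies the column $(0,0,1)^{\mathrm{t}}$ of the coefficient matrix.

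Next I would write out the full $3\times 4$ matrix, whose first three columns carry the coefficients
$$\alpha_{0,i}=x'_i(s_0)-\frac{v_i}{v_0}x'_0(s_0),\quad \alpha_{1,i}=x''_i(s_0)-\frac{v_i}{v_0}x''_0(s_0),\quad \alpha_{2,i}=\frac12\Big(x'''_i(s_0)-\frac{v_i}{v_0}x'''_0(s_0)\Big),$$
for $i=1,2,3$, and whose last column is $(0,0,1)^{\mathrm{t}}$. Because the only nonzero entry of the $u$-column sits in the bottom ($j=2$) row, a cofactor expansion along that column shows that every $3\times 3$ minor using the $u$-column equals the corresponding $2\times 2$ minor of the top two rows of the first three columns. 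Hence the whole matrix has rank $3$ if and only if the $2\times 3$ block
$$B=\begin{pmatrix} \alpha_{0,1} & \alpha_{0,2} & \alpha_{0,3}\\ \alpha_{1,1} & \alpha_{1,2} & \alpha_{1,3}\end{pmatrix}$$
has rank $2$, and this is precisely the matrix $B$ analysed in the proof of Proposition~\ref{desdobramentohtT}(a).

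Finally I would invoke that computation verbatim: passing to $\widetilde{B}=v_0B$ and forming its Gram matrix $G_{\widetilde{B}}$ for the Euclidean inner product, one obtains, after the Lorentzian normalisation $n_\gamma(s_0)=(1,0,0,0)$, that $\det G_{\widetilde{B}}=\dfrac{k_g^2(s_0)\cosh^2\theta_0}{k_g^2(s_0)-k_n^2(s_0)}\big(k_g^2(s_0)\cosh^2\theta_0-k_n^2(s_0)\big)\neq 0$, since $k_g^2(s_0)>k_n^2(s_0)$. Thus $B$ always has rank $2$, so the $3\times 4$ coefficient matrix has rank $3$ irrespective of the value of $\lambda_0(s_0)$, and $\widetilde{H}$ is a versal deformation of $(h^{T}_t)_{v_0}$.

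The only conceptual point — and the reason no genuinely new computation is needed — is why adjoining $u(s-s_0)^2$ repairs the degeneracy at $\lambda_0(s_0)=0$. In Proposition~\ref{desdobramentohtT}(b) the obstruction was exactly that $\det A$ is proportional to $\lambda_0(s_0)^2$, i.e. the loss of the top-order ($j=2$) row of the $v$-derivatives; the monomial $(s-s_0)^2$ is chosen so that its $u$-derivative contributes this missing degree-$2$ direction precisely, reducing versality to the rank-$2$ statement already secured in part~(a). I do not anticipate a real obstacle here: the heavy Frenet--Serret bookkeeping is entirely inherited from Proposition~\ref{desdobramentohtT}, and the remaining argument is linear algebra organised around the distinguished column $(0,0,1)^{\mathrm{t}}$.
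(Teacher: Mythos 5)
Your proof is correct, and its skeleton coincides with the paper's: both compute the same $2$-jets of the partial derivatives, observe that the parameter $u$ contributes the column $(0,0,1)^{\mathrm{t}}$, and thereby reduce versality of $\widetilde{H}$ to the statement that the $2\times 3$ block $B$ of first- and second-derivative coefficients has rank $2$. Where you diverge from the paper is in how that final rank statement is established. You invoke the Gram determinant computation of Proposition \ref{desdobramentohtT}(a) verbatim; this is legitimate, because that computation uses only $\langle t(s_0),v\rangle=\langle t'(s_0),v\rangle=0$ (which hold at an $A_3$-point just as at an $A_2$-point, since $v_0=S_\gamma(s_0,\theta_0)$ in both cases), the normalization $n_\gamma(s_0)=(1,0,0,0)$, and the standing hypothesis $k_g^2(s_0)>k_n^2(s_0)$ — none of which involve $\lambda_0$. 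The paper instead closes with a direct geometric independence argument: after transposing and a column operation it considers the vectors $a=(1,F)$, $b=(0,G)$, $c=(1,0,0,0)$, and asserts that a linear dependence would force $t(s_0)$ to be parallel to $v$, impossible since $t(s_0)$ is spacelike and $v$ timelike. Your route is actually the more airtight of the two: a dependence among $a,b,c$ is equivalent to dependence of $F$ and $G$, and the paper's argument as written only covers the subcase $F=0$; it does not address $G=0$ or $G=\lambda F$ with $F\neq0$, which would have to be excluded by noting that every nonzero vector in the span of $t(s_0)$ and $t'(s_0)$ is spacelike — which is precisely the content of the positive Gram determinant you use. So your reuse of part (a) both avoids redundant computation and supplies the completeness that the paper's independence argument leaves implicit.
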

\begin{proof}
We have  $$\widetilde{H}(s,v,u)=H^{T}_t(s,v)+u(s-s_0)^2=-v_0x'_0(s)+v_1x'_1(s)+v_2x'_2(s)+v_3x'_3(s)+u(s-s_0)^2,$$ where $v=(v_0,v_1,v_2,v_3)$, $t(s)=(x'_0(s),x'_1(s),x'_2(s),x'_3(s))$ and $v_0=\sqrt{1+v_1^2+v_2^2+v_3^2}$.

Thus $$\dfrac{\partial \widetilde{H}}{\partial v_i}(s,v,0)=x'_i(s)-\dfrac{v_i}{v_0}x'_0(s),$$ for $i=1,2,3$. Therefore,  the $2$-jet of $\dfrac{\partial \widetilde{H}}{\partial v_i}(s,v,0)$ at $s_0$ is $$x'_i(s_0)-\dfrac{v_i}{v_0}x'_0(s_0)+\left(x''_i(s_0)-\dfrac{v_i}{v_0}x''_0(s_0)\right)(s-s_0)+\dfrac{1}{2}\left(x'''_i(s_0)-
\dfrac{v_i}{v_0}x'''_0(s_0)\right)(s-s_0)^2,$$
and the $2$-jet of  $\dfrac{\partial \widetilde{H}}{\partial u}(s,v,0)$ at $s_0$ is $(s-s_0)^2$.

 We assume that $(h^{T}_t)_v$ has an $A_3$-singularity at $s=s_0$. It is enough to show that \\

\noindent rank$\left(
    \begin{array}{cccc}
      x'_1(s_0)-\dfrac{v_1}{v_0}x'_0(s_0) &  x'_2(s_0)-\dfrac{v_2}{v_0}x'_0(s_0)  &  x'_3(s_0)-\dfrac{v_3}{v_0}x'_0(s_0)&0  \\
       x''_1(s_0)-\dfrac{v_1}{v_0}x''_0(s_0)  &  x''_2(s_0)-\dfrac{v_2}{v_0}x''_0(s_0)  & x''_3(s_0)-\dfrac{v_3}{v_0}x''_0(s_0)&0  \\
       x'''_1(s_0)-\dfrac{v_1}{v_0}x'''_0(s_0) & x'''_2(s_0)-\dfrac{v_2}{v_0}x'''_0(s_0) & x'''_3(s_0)-\dfrac{v_3}{v_0}x'''_0(s_0)&1 \\
    \end{array}
  \right)=$\\\\rank $=\left(
            \begin{array}{ccc}
             0& 0 & 1\\
              x'_1(s_0)-\dfrac{v_1}{v_0}x'_0(s_0) &  x''_1(s_0)-\dfrac{v_1}{v_0}x''_0(s_0) & 0 \\
               x'_2(s_0)-\dfrac{v_2}{v_0}x'_0(s_0)  & x''_2(s_0)-\dfrac{v_2}{v_0}x''_0(s_0) & 0 \\
               x'_3(s_0)-\dfrac{v_3}{v_0}x'_0(s_0) & x''_3(s_0)-\dfrac{v_3}{v_0}x''_0(s_0) & 0
              \\
            \end{array}
          \right)=3.
$
\vspace{0.5cm}

The rank of the last matrix has the same value as the rank of
$$\left(
            \begin{array}{ccc}
             1& 0 & 1\\
              x'_1(s_0)-\dfrac{v_1}{v_0}x'_0(s_0) &  x''_1(s_0)-\dfrac{v_1}{v_0}x''_0(s_0) & 0 \\
               x'_2(s_0)-\dfrac{v_2}{v_0}x'_0(s_0)  & x''_2(s_0)-\dfrac{v_2}{v_0}x''_0(s_0) & 0 \\
               x'_3(s_0)-\dfrac{v_3}{v_0}x'_0(s_0) & x''_3(s_0)-\dfrac{v_3}{v_0}x''_0(s_0) & 0
              \\
            \end{array}
          \right).$$

          Consider $$a(s_0)=\left(1, x'_1(s_0)-\dfrac{v_1}{v_0}x'_0(s_0),x'_2(s_0)-\dfrac{v_2}{v_0}x'_0(s_0),x'_3(s_0)-\dfrac{v_3}{v_0}x'_0(s_0)\right),$$ $$b(s_0)=\left(0,x''_1(s_0)-\dfrac{v_1}{v_0}x''_0(s_0), x''_2(s_0)-\dfrac{v_2}{v_0}x''_0(s_0),x''_3(s_0)-\dfrac{v_3}{v_0}x''_0(s_0) \right)$$ and $c(s_0)=(1,0,0,0)$. We can show that $a(s_0),$ $b(s_0)$, $c(s_0)$ are linearly independent. Indeed, if $a(s_0),$ $b(s_0)$, $c(s_0)$ are linearly dependent then $x'_1(s_0)=\dfrac{v_1}{v_0}x'_0(s_0)$, $x'_2(s_0)=\dfrac{v_2}{v_0}x'_0(s_0)$ and $x'_3(s_0)=\dfrac{v_3}{v_0}x'_0(s_0)$, that is, $t(s_0)$ and $v$ are parallel and so we have a  contradiction because $t$ is spacelike and $v$ is timelike.

 \end{proof}

The cuspidal beaks is defined to a germ of surface diffeomorphic to $CBK = \{(x_1, x_2, x_3)|x_1 = v, x_2 = -2u_3 + v_2u, x_3 = 3u_4 - v_2u_2\}$. See the picture in \cite{beaks}.
Using Theorem \ref{teobruce1}, Propositions  \ref{desdobramentohtT} and \ref{prop:defnova}, we can obtain the diffeomorphism type  of the hyperbolic surface in the following theorem.

\begin{theo}\label{teo:classificacao}
Let $\gamma:I\rightarrow M$ be a  unit speed curve  with $k_g(s)\neq0$, $(k_n\tau_2+k_g\tau_g)(s)\neq0$ and $k_g^2(s)>k_n^2(s)$. Let  $S_{\gamma}$ be  the hyperbolic surface of $\gamma$. Then
\begin{itemize}
\item[(1)] $S_{\gamma}$ is singular at $(s_0, \theta_0)$ if and only if $$\tanh \theta_0=\dfrac{k_gk'_n+k_g^2\tau_1-k_n^2\tau_1-k_nk'_g}{\sqrt{k_g^2-k_n^2}(k_n\tau_2+k_g\tau_g)}(s_0).$$
    That is, the singular points of the hyperbolic surface are given by  $S_{\gamma}(s)=S_{\gamma}(s,\theta(s))$, where $\tanh \theta(s)$ satisfies the above equation.
\item[(2)]    The germ of  $S_{\gamma}$  at $(s_0, \theta_0)$ is  diffeomorphic to a cuspidal edge   if
     $$\tanh \theta_0=\dfrac{k_gk'_n+k_g^2\tau_1-k_n^2\tau_1-k_nk'_g}{(k_n\tau_2+k_g\tau_g)\sqrt{k_g^2-k_n^2}}(s_0)\,\,\mbox{and}\,\,\,\rho(s_0)\neq0.$$

\item[(3)] The germ of $S_{\gamma}$ at $(s_0, \theta_0)$ is diffeomorphic to a swallowtail  if
       $$\tanh \theta_0=\dfrac{k_gk'_n+k_g^2\tau_1-k_n^2\tau_1-k_nk'_g}{(k_n\tau_2+k_g\tau_g)\sqrt{k_g^2-k_n^2}}(s_0),\,\,
    \lambda_0(s_0)\neq0,\,\,\,\rho(s_0)=0\,\,\,\mbox {and}\,\,\, \rho'(s_0)\neq0.$$

    \item[(4)] The germ of $S_{\gamma}$ at $(s_0, \theta_0)$ is diffeomorphic to a cuspidal beaks if    $$
    \lambda_0(s_0)=0,\,\,\,\lambda_1(s_0)\neq0,\,\,\,\rho(s_0)=0\,\,\, \mbox{and}\,\,\,\rho'(s_0)\neq0.$$

\item[(5)] A cuspidal lips does not appear.
\end{itemize}
\end{theo}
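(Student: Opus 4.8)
The plan is to handle parts (1)--(3) by combining the versality already established in Proposition \ref{desdobramentohtT} with the normal forms of Theorem \ref{teobruce1}, reading off the precise singularity type of the member $(h^{T}_t)_{v_0}$ from Proposition \ref{singht}, and then to treat the degenerate parts (4)--(5) by passing to the enlarged deformation $\widetilde{H}$ of Proposition \ref{prop:defnova} together with the recognition criteria of \cite{beaks}. For (1) I would proceed by direct computation from the Frenet--Serret formulae. Since $\partial S_{\gamma}/\partial\theta=\frac{\sinh\theta}{\sqrt{k_g^2-k_n^2}}(k_g n_{\gamma}+k_n n_1)+\cosh\theta\,n_2$, the parametrization fails to be an immersion exactly when $\partial S_{\gamma}/\partial s$ becomes proportional to this vector; expanding $\partial S_{\gamma}/\partial s$ in the frame $\{n_{\gamma},t,n_1,n_2\}$ and imposing the proportionality, the $t$- and $n_2$-components produce precisely the relation $\tanh\theta_0=\lambda_0/((k_n\tau_2+k_g\tau_g)\sqrt{k_g^2-k_n^2})(s_0)$. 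The conceptual route I would also record is that $S_{\gamma}=\mathcal{D}_{H^{T}_t}$ by Corollary \ref{coro:discsuphiper}, and the singular locus of the discriminant of a versal family is the image of the set where $(h^{T}_t)_v$ has an $A_{\ge 2}$-singularity, which by Proposition \ref{singht}(3) is exactly the stated $\tanh\theta_0$ equation.

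For (2) and (3) the exact type of the member is identified from Proposition \ref{singht}: under the $\tanh\theta_0$ relation, $(h^{T}_t)_{v_0}$ has an $A_2$-singularity precisely when $\rho(s_0)\neq0$ (Proposition \ref{singht}(4)), and an $A_3$-singularity precisely when $\rho(s_0)=0$ and $\rho'(s_0)\neq0$ (Proposition \ref{singht}(5)). In the first case Proposition \ref{desdobramentohtT}(a) provides versality, and since $r=\dim H^3(-1)=3$, Theorem \ref{teobruce1}(1) gives $\mathcal{D}_{H^{T}_t}\cong C\times\mathbb{R}$, the cuspidal edge. In the second case the generic hypothesis $\lambda_0(s_0)\neq0$ makes Proposition \ref{desdobramentohtT}(b) applicable, and Theorem \ref{teobruce1}(2) gives $\mathcal{D}_{H^{T}_t}\cong SW$, the swallowtail. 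Because $S_{\gamma}$ is a wavefront, these ambient-diffeomorphism statements transfer directly to $S_{\gamma}$ at $(s_0,\theta_0)$.

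For (4) the point is that the degenerate locus $\lambda_0(s_0)=0$ is exactly where $\det A$ in Proposition \ref{desdobramentohtT} vanishes, so the original family is no longer versal there and Theorem \ref{teobruce1} cannot be invoked. Here I would instead use Proposition \ref{prop:defnova}, which asserts that the enlarged deformation $\widetilde{H}$ is a versal deformation of the $A_3$-germ, and feed its $2$-jet data into the Recognition Lemma of \cite{beaks}. Under $\lambda_0(s_0)=0$ the $A_3$ conditions reduce, as recorded just before Proposition \ref{prop:defnova}, to $\rho(s_0)=0$ together with $\rho'(s_0)=-\frac{1}{\sqrt{k_g^2-k_n^2}}(-3\lambda_0'\lambda_1+\lambda_2)(s_0)\neq0$, and the nondegeneracy separating the beaks/lips stratum from the swallowtail stratum is $\lambda_1(s_0)\neq0$; matching the resulting jet to the model then yields the beaks normal form $CBK$.

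The heart of the argument, and the step I expect to be the main obstacle, is (5): showing that the alternative cuspidal lips is forbidden. In \cite{beaks} the lips and beaks models are distinguished by the sign (definite versus indefinite) of a Hessian-type quadratic form attached to the identifier of singularities along the singular set of the front. My plan is to compute this discriminant explicitly in the present frame and to show that the Lorentzian signature condition $k_g^2(s)>k_n^2(s)$, which is standing throughout, forces the quadratic form to be of indefinite type, so that only the beaks normal form can occur. Carrying out this sign computation and matching it precisely to the inequality used in the recognition criterion of \cite{beaks} is the delicate point, since it is exactly the hyperbolic (indefinite) nature of the ambient geometry, rather than any genericity assumption on $\gamma$, that rules out the definite (lips) case.
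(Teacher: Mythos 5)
Your handling of parts (1)--(3) coincides with the paper's proof. Part (1) is the same direct computation of $\partial S_{\gamma}/\partial s$ and $\partial S_{\gamma}/\partial\theta$ from the Frenet--Serret formulae (one small slip: the $t$-components of both partials vanish identically, so the dependence relation comes from comparing the $n_{\gamma}$-, $n_1$- and $n_2$-components, not from ``the $t$- and $n_2$-components''). Parts (2)--(3) are obtained exactly as in the paper: read off the $A_2$/$A_3$ conditions from Proposition \ref{singht}, invoke versality from Proposition \ref{desdobramentohtT}, and apply Theorem \ref{teobruce1} to $\mathcal{D}_{H^{T}_t}=S_{\gamma}$ (Corollary \ref{coro:discsuphiper}). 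Part (4) also follows the paper's route in outline: the enlarged deformation $\widetilde{H}$ of Proposition \ref{prop:defnova} combined with the recognition results of \cite{beaks}, although the paper is concrete about which function carries the recognition data, namely $\varphi=(\partial^2 H^{T}_t/\partial s^2)|\mathcal{D}_{H^{T}_t}$.

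The genuine problem is part (5), which you yourself single out as the heart of the matter. Your proposed mechanism --- that the standing inequality $k_g^2>k_n^2$ forces the relevant Hessian-type form to be indefinite --- is not what happens, and a proof organized around ``matching the sign computation to that inequality'' will not close. What the paper actually does is this: having shown via Proposition \ref{prop:defnova} and Proposition 7.5 of \cite{beaks} that $H^{T}_t$ is a Morse family, it computes $\varphi(s,\theta)=\frac{-\cosh\theta}{\sqrt{k_g^2-k_n^2}}\lambda_0(s)+\sinh\theta\,(k_n\tau_2+k_g\tau_g)(s)$. The decisive structural fact is that $\varphi$ is a linear combination of $\cosh\theta$ and $\sinh\theta$ with $s$-dependent coefficients, so $\partial^2\varphi/\partial\theta^2=\varphi$; at the candidate point $(s_0,0)$ one has $\lambda_0(s_0)=0$, hence $\varphi(s_0,0)=0$, so the $\theta\theta$-entry of $\mathrm{Hess}(\varphi)(s_0,0)$ vanishes, while the mixed entry equals $\lambda_1(s_0)$. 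Consequently $\det\mathrm{Hess}(\varphi)(s_0,0)=-\lambda_1(s_0)^2<0$: the form is automatically indefinite once $\lambda_1(s_0)\neq0$, with no signature input whatsoever, and this is what selects the normal form $t^4-v_1^2t^2+v_2t+v_3$ (beaks) rather than $t^4+v_1^2t^2+v_2t+v_3$ (lips) in Lemma 7.7 of \cite{beaks}. That the inequality $k_g^2>k_n^2$ is irrelevant is confirmed by the paper itself: Theorem \ref{teo:classificacao2}(5) excludes the lips for the de Sitter surface by the identical argument, even though there the opposite inequality $k_n^2>k_g^2$ holds and $\varphi$ is a combination of $\cos\theta$ and $\sin\theta$ (so $\varphi_{\theta\theta}=-\varphi$, with the same vanishing on the singular set). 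So the computation you propose would, if carried out honestly, produce the correct negative determinant, but the conceptual reason you plan to build the argument around is wrong, and the ``delicate matching'' to the Lorentzian signature that you anticipate as the main obstacle does not exist.
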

\begin{proof}

We consider the hyperbolic surface $$S_{\gamma}(s,\theta)=\frac{\cosh \theta}{\sqrt{k_g^2(s)-k_n^2(s)}}\left(k_g(s)n_{\gamma}(s)+k_n(s)n_1(s)\right)+\sinh \theta n_2(s).$$ Then we have

\begin{align*}
\dfrac{\partial S_{\gamma}}{\partial s}(s,\theta)&=\Bigg(\dfrac{\cosh\theta(-k'_gk_n^2+k_gk_nk'_n+k_n\tau_1k_g^2-k_n^3\tau_1)+ \sinh\theta\tau_2(k_g^2 -k_n^2)\sqrt{k_g^2-k_n^2}}{(k_g^2-k_n^2)\sqrt{k_g^2-k_n^2}}\Bigg)(s)n_{\gamma}(s)\\
&+\Bigg(\dfrac{\cosh\theta(k_g^3\tau_1-k_g\tau_1k_n^2+k'_nk_g^2-k_nk_gk'_g)-\sinh\theta\tau_g(k_g^2-k_n^2)\sqrt{k_g^2-k_n^2}}{(k_g^2-k_n^2)\sqrt{k_g^2-k_n^2}}\Bigg)(s)n_1(s)\\
&+\Bigg(\dfrac{\cosh\theta(k_g\tau_2+k_n\tau_g)}{\sqrt{k_g^2-k_n^2}}\Bigg)(s)n_2(s)\,\,\,\,\mbox{and}
\end{align*}

$\dfrac{\partial S_{\gamma}}{\partial \theta}(s,\theta)=\dfrac{\sinh\theta k_g(s)}{\sqrt{k_g^2(s)-k_n^2(s)}}n_{\gamma}(s)+\dfrac{\sinh\theta k_n(s)}{\sqrt{k_g^2(s)-k_n^2(s)}}n_1(s)+\cosh\theta n_2(s)$.\\\\

Therefore,  the vectors  $\left\{\dfrac{\partial S_{\gamma}}{\partial s}(s_0,\theta_0), \dfrac{\partial S_{\gamma}}{\partial \theta}(s_0,\theta_0)\right\}$ are  linearly dependent if and only if $\tanh\theta_0=\dfrac{k_gk'_n+k_g^2\tau_1-k_n^2\tau_1-k_nk'_g}{(k_n\tau_2+k_g\tau_g)\sqrt{k_g^2-k_n^2}}(s_0)$ and thus assertion (1) holds.


 By Corollary \ref{coro:discsuphiper}, the discriminant set $\mathcal{D}_{H_t^T}$ of the family of timelike tangential height functions $H^{T}_t$ of $\gamma$ is the hyperbolic surface $S_{\gamma}$. It also follows from assertions (4) and (5) of Proposition \ref{singht} that $(h^{T}_t)_{v_0}$ has an  $A_2$-singularity (respectively, an $A_3$-singularity) at $s=s_0$ if and only if  $$\tanh \theta_0=\dfrac{k_gk'_n+k_g^2\tau_1-k_n^2\tau_1-k_nk'_g}{(k_n\tau_2+k_g\tau_g)\sqrt{k_g^2-k_n^2}}(s_0) \,\,\,\mbox{and}\,\,\,\rho(s_0)\neq0$$ (respectively, $\tanh \theta_0=\dfrac{k_gk'_n+k_g^2\tau_1-k_n^2\tau_1-k_nk'_g}{(k_n\tau_2+k_g\tau_g)\sqrt{k_g^2-k_n^2}}(s_0)$, $\rho(s_0)=0$ and $ \rho'(s_0)\neq0$). Therefore, by  Proposition \ref{desdobramentohtT}, we have  assertions (2) and  (3).

  By Proposition 7.5  in \cite{beaks} and  by the previous  Proposition \ref{prop:defnova}, $H^{T}_{t}$ is a Morse family of hypersurfaces.

 We now calculate $\varphi=(\partial^2 H^{T}_{t}/{\partial s^2})|\mathcal{D}_{H^{T}_{t}}.$ Then we have
\begin{align*}
\dfrac{\partial^2 H^{T}_{t}}{{\partial s^2}}(s,\theta)=&\left\langle t''(s),\frac{\cosh \theta}{\sqrt{k_g^2(s)-k_n^2(s)}}\left(k_g(s)n_{\gamma}(s)+k_n(s)n_1(s)\right)+\sinh \theta n_2(s)\right\rangle\\
=&\frac{-\cosh \theta}{\sqrt{k_g^2(s)-k_n^2(s)}}(k_gk'_n+k_g^2\tau_1-k_n^2\tau_1-k_nk'_g)(s)+ \sinh\theta (k_n\tau_2+k_g\tau_g)(s).
\end{align*}

The Hessian matrix of $\varphi(s,\theta)=\dfrac{-\cosh \theta}{\sqrt{k_g^2(s)-k_n^2(s)}}(k_gk'_n+k_g^2\tau_1-k_n^2\tau_1-k_nk'_g)(s)+ \sinh\theta (k_n\tau_2+k_g\tau_g)(s)$ is
$$\mbox{Hess}(\varphi)(s_0,0)=\left(
    \begin{array}{cc}
      \dfrac{\partial^2\varphi}{\partial s^2}(s_0,0) & \lambda_1(s_0)  \\
        \lambda_1(s_0)  & 0 \\
    \end{array}
  \right).
$$
Since $\lambda_1(s_0)\neq0$, we have $\det \mbox{Hess}(\varphi)(s_0,0)\neq 0$. By
Lemma 7.7 in \cite{beaks},  $H^{T}_{t}$ is $P$-$\mathcal{K}$-equivalent to $t^4\pm v_1^2t^2+v_2t +v^3$ (the notion of generating families, Legendrian equivalence and $P$-$\mathcal{K}$-equivalent are given in \cite{beaks} page 30). The singular set of $\mathcal{D}_{H^{T}_{t}}$ is given by $\varphi(s,\theta)=0$. Therefore it consists of two curves that transversally intersect at $(s_0,0)$. So the normal form is  $t^4- v_1^2t^2+v_2t +v^3$ and   the surface is diffeomorphic to  the cuspidal beaks and we have  assertions (4) and (5).
\end{proof}

We have three types of models of surfaces in $M$, which are given by intersections of $M$ with hyperplanes in $\mathbb{R}^4_1$. We call a surface $M\cap HP(v,c)$  a \emph{timelike slice} if $v$ is spacelike, a \emph{spacelike slice} if $v$ is timelike or a \emph{lightlike slice} if $v$ is lightlike.

In the  following proposition we relate the curve $\gamma$ of the hyperbolic surface with the invariant $\rho$ and a slice surface. In this case, the singular locus of the  hyperbolic surface of $\gamma$ is a point.

\begin{prop}\label{pro:slice1}
Let $\gamma:I\rightarrow M$ be a unit speed curve such that $k_g(s)\neq0$, $(k_n\tau_2+k_g\tau_g)(s)\neq0$ and  $k_g^2(s)>k_n^2(s)$ for any $s\in I$.  Let  $S_{\gamma}(s,\theta(s))$ be the singular points of the hyperbolic surface of $\gamma$. Then the following conditions are equivalent:
\begin{itemize}
\item[(1)] $S_{\gamma}(s,\theta(s))$ is a constant timelike vector;
\item[(2)] $\rho(s)\equiv 0$;
\item[(3)] there exists a timelike vector $v$ and a real number $c$ such that $Im(\gamma)\subset M\cap HP(v,c)$.
\end{itemize}
\end{prop}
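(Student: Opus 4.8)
The plan is to prove the three equivalences through the cycle $(1)\Rightarrow(3)\Rightarrow(2)\Rightarrow(1)$. The common thread is that, by its very definition, the hyperbolic curve $CH_\gamma(s)=S_\gamma(s,\theta(s))$ satisfies
$$\langle t(s),CH_\gamma(s)\rangle=\langle t'(s),CH_\gamma(s)\rangle=\langle t''(s),CH_\gamma(s)\rangle=0$$
identically in $s$ (these are the conditions of Proposition \ref{singht}(3)), together with $\langle CH_\gamma(s),CH_\gamma(s)\rangle=-1$ because $CH_\gamma(s)\in H^3(-1)$. I will differentiate these identities repeatedly and translate the vanishing of $\rho$ into a statement about $CH_\gamma'$.

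For $(1)\Rightarrow(3)$: if $CH_\gamma(s)\equiv v$ is a constant timelike vector, then $\langle t(s),v\rangle=\langle\gamma'(s),v\rangle=0$ for all $s$, so integrating gives $\langle\gamma(s),v\rangle\equiv c$ for a constant $c$; hence $\mathrm{Im}(\gamma)\subset M\cap HP(v,c)$ with $v$ timelike. For $(3)\Rightarrow(2)$: as $HP(v,c)$ is unchanged under rescaling of $(v,c)$, I may take $v\in H^3(-1)$. Differentiating $\langle\gamma(s),v\rangle\equiv c$ four times gives $\langle t(s),v\rangle=\langle t'(s),v\rangle=\langle t''(s),v\rangle=\langle t'''(s),v\rangle=0$ for every $s$. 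By Proposition \ref{singht}(2)--(3) the first three equalities force $v=CH_\gamma(s)$, and then by Proposition \ref{singht}(4) the fourth equality is equivalent to $\rho(s)=0$; since this holds for all $s$, we obtain $\rho\equiv0$.

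The heart of the argument is $(2)\Rightarrow(1)$. Differentiating the three defining identities once and cancelling with the identities themselves yields
$$\langle t(s),CH_\gamma'(s)\rangle=0,\qquad \langle t'(s),CH_\gamma'(s)\rangle=0,\qquad \langle t''(s),CH_\gamma'(s)\rangle=-\langle t'''(s),CH_\gamma(s)\rangle,$$
while differentiating $\langle CH_\gamma,CH_\gamma\rangle=-1$ gives $\langle CH_\gamma(s),CH_\gamma'(s)\rangle=0$. When $\rho\equiv0$, Proposition \ref{singht}(4), applied with $v=CH_\gamma(s)$ (whose first three hypotheses hold automatically), gives $\langle t'''(s),CH_\gamma(s)\rangle=0$. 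Thus $CH_\gamma'(s)$ is Lorentz-orthogonal to each of $t(s),t'(s),t''(s),CH_\gamma(s)$, and since the metric is nondegenerate it suffices to know that these four vectors form a basis of $\mathbb{R}^4_1$ to conclude $CH_\gamma'(s)\equiv0$, that is, that $CH_\gamma$ is a constant vector in $H^3(-1)$ and hence a constant timelike vector. This is exactly $(1)$.

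The main obstacle is the basis claim just used. Writing the four vectors in the Darboux frame $\{n_\gamma,t,n_1,n_2\}$, with $t'=k_nn_\gamma+k_gn_1$ and the expression for $t''$ recorded in the proof of Proposition \ref{singht}(3), I expect the determinant of the transition matrix to reduce, after expanding along the $t$-column, to a nonzero multiple of
$$(k_n\tau_2+k_g\tau_g)^2(k_g^2-k_n^2)-\lambda_0^2,$$
with $\lambda_0=k_gk'_n+k_g^2\tau_1-k_n^2\tau_1-k_nk'_g$. This is strictly positive precisely because the defining relation $\tanh\theta(s)=\lambda_0/\big((k_n\tau_2+k_g\tau_g)\sqrt{k_g^2-k_n^2}\big)$ has $|\tanh\theta(s)|<1$, which is exactly the condition guaranteeing that $\theta(s)$, and hence $CH_\gamma$, is well defined; this yields $\lambda_0^2<(k_n\tau_2+k_g\tau_g)^2(k_g^2-k_n^2)$. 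Therefore the determinant never vanishes, the four vectors are independent, and the cycle closes.
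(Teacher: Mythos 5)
Your proof is correct, but it takes a genuinely different route from the paper's, most notably in the key implication $(2)\Rightarrow(1)$. The paper proves $(1)\Leftrightarrow(2)$ by brute force: it differentiates $\theta(s)$ explicitly, computes $\frac{d}{ds}S_{\gamma}(s,\theta(s))$ via the Frenet--Serret type formulae, and exhibits this derivative as a nonvanishing vector field times the invariant $\rho(s)$, so that constancy is equivalent to $\rho\equiv 0$; it then gets $(1)\Rightarrow(3)$ by another long computation showing that $\langle\gamma(s),S_{\gamma}(s,\theta(s))\rangle$ has vanishing derivative, and proves $(3)\Rightarrow(1)$ by the same height-function argument you use for $(3)\Rightarrow(2)$. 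Your replacement of the computational core by the orthogonality argument --- $CH_{\gamma}'$ is Lorentz-orthogonal to $t,t',t'',CH_{\gamma}$, and these four vectors form a basis, so nondegeneracy of the metric forces $CH_{\gamma}'\equiv0$ --- is cleaner: it avoids computing $\theta'(s)$ altogether, it isolates exactly where $\rho$ enters (as the obstruction $\langle t''',CH_{\gamma}\rangle$, via Proposition \ref{singht}(4)), and your $(1)\Rightarrow(3)$ is a one-line integration instead of the paper's page of cancellations. What the paper's approach buys in exchange is the explicit formula for $\frac{d}{ds}S_{\gamma}(s,\theta(s))$, including its direction $a\,n_{\gamma}+b\,n_1+c\,n_2$, which is stronger information than mere vanishing. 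Two small points you should make explicit: (i) your determinant claim is stated only as an expectation, but it does check out --- expanding along the $t$-column gives, up to sign, $\frac{\cosh\theta}{(k_n\tau_2+k_g\tau_g)\sqrt{k_g^2-k_n^2}}\bigl((k_n\tau_2+k_g\tau_g)^2(k_g^2-k_n^2)-\lambda_0^2\bigr)$, which is nonzero precisely because $|\tanh\theta(s)|<1$ wherever the singular curve is defined; (ii) in $(3)\Rightarrow(2)$, after normalizing $v$ to lie in $H^3(-1)$ you may still need to replace $v$ by $-v$, since the parametrization in Proposition \ref{singht}(2) (with $\cosh\theta>0$ and $k_g>0$) covers only one of the two antipodal vectors $\pm v$; this costs nothing because $HP(-v,-c)=HP(v,c)$, and the same normalization is implicit in the paper's own argument.
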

\begin{proof}
By definition

\noindent$S_{\gamma}(s,\theta(s))= \dfrac{\cosh \theta(s)}{\sqrt{(k_g^2-k_n^2)(s)}}\left((k_gn_{\gamma})(s)+(k_nn_1)(s)+\dfrac{(k_gk'_n+k_g^2\tau_1-k_n^2\tau_1-k_nk'_g)(s)}{(k_n\tau_2+k_g\tau_g)(s)}n_2(s)\right).$

Thus,
\begin{align*}
&\dfrac{dS_{\gamma}(s,\theta(s))}{ds}=\\&\left(\dfrac{\cosh \theta(s)}{\sqrt{k_g^2(s)-k_n^2(s)}}\right)'\left(k_g(s)n_{\gamma}(s)+k_n(s)n_1(s)+\dfrac{(k_gk'_n+k_g^2\tau_1-k_n^2\tau_1-k_nk'_g)}
{(k_n\tau_2+k_g\tau_g)}(s)n_2(s)\right)\\
&+\left(\dfrac{\cosh
\theta(s)}{\sqrt{k_g^2(s)-k_n^2(s)}}\right)\left(k_g(s)n_{\gamma}(s)+k_n(s)n_1(s)+\dfrac{(k_gk'_n+k_g^2\tau_1-k_n^2\tau_1-k_nk'_g)}
{(k_n\tau_2+k_g\tau_g)}(s)n_2(s)\right)'.
\end{align*}
Furthermore,
$$\theta'(s)=\dfrac{X(s)}
{\sqrt{(k_g^2-k_n^2)(s)}((k_g^2-k_n^2)(k_n\tau_2+k_g\tau_g)^2-(k_gk'_n+k_g^2\tau_1-k_n^2\tau_1-k_nk'_g)^2)(s)},$$

\noindent where $X(s)=(k_gk_n''+2k_gk_g'\tau1
+k_g^2\tau_1'-2k_nk_n'\tau_1-k_n^2\tau_1'-k_nk_g'')(k_g^2-k_n^2)(k_n\tau_2+k_g\tau_g)-(k_gk'_n+k_g^2\tau_1-k_n^2\tau_1-k_nk'_g)((k_gk_g'-k_nk_n')(k_n\tau_2+k_g\tau_g)
+(k_g^2-k_n^2)(k_n'\tau_2+k_n\tau_2'+k_g'\tau_g+k_g\tau'_g))(s)$.

Using the  Frenet-Serret type formulae, replacing  $\theta'(s)$ in the previous expression of the derivative and making some calculations, we have that

\begin{align*}
&\dfrac{dS_{\gamma}(s,\theta(s))}{ds}=\\&\dfrac{-\cosh \theta (an_{\gamma}+bn_1+cn_2)\rho}{\sqrt{k_g^2-k_n^2}(k_n\tau_2+k_g\tau_g)((k_g^2-k_n^2)(k_n\tau_2+k_g\tau_g)^2-(k_gk'_n+k_g^2\tau_1-k_n^2\tau_1-k_nk'_g)^2)}(s),
\end{align*}

\noindent where $a(s)=k_g(k_n'k_g+k_g^2\tau_1-k_n^2\tau_1-k_g'k_n)(s)$, $b(s)=k_n(k_n'k_g+k_g^2\tau_1-k_n^2\tau_1-k_g'k_n)(s)$, $c(s)=(k_g^2-k_n^2)(k_n\tau_2+k_g\tau_g)(s)$ and $\rho(s)$ is the  invariant.\vspace{0.5cm}

Therefore, \noindent$\dfrac{dS_{\gamma}}{ds}\equiv0$ if and only if $\rho(s)\equiv0$. This means that the statements  (1) and (2) are equivalent. We now assume that the statement (1) holds, then we have

\vspace{0.5cm}

$\langle \gamma(s),S_{\gamma}(s,\theta(s))\rangle=\\
\dfrac{\cosh \theta}{\sqrt{k_g^2-k_n^2}}\left(k_g\langle\gamma,n_{\gamma}\rangle+k_n\langle\gamma,n_1\rangle+\dfrac{(k_gk_n'+k_g^2\tau_1-k_n^2\tau_1-k_nk_g')}{k_n\tau_2+k_g\tau_g}
\langle\gamma,n_2\rangle\right)(s).$

\vspace{0.5cm}

Let $g(s)=\langle \gamma(s),S_{\gamma}(s,\theta(s))\rangle$, deriving, using the Frenet-Serret type formulae  and  making long calculations we show that
 $$g'(s)=g_1(s)\langle \gamma(s), n_{\gamma}(s)\rangle+g_2(s) \langle \gamma(s),n_1(s)\rangle+g_3(s)\langle\gamma(s),n_2(s)\rangle,$$
 where
$
g_1(s)=\dfrac{A(s)\cosh \theta(s) }{D(s)},\,\,\,\,g_2(s)=\dfrac{B(s)\cosh \theta(s) }{D(s)}\,\,\,\,\hbox{and}\,\,\,\,g_3(s)=\dfrac{C(s)\cosh \theta(s)}{D_1(s)}
$
\noindent with
\begin{align*}
A(s)=&\Bigg(k_g(k_gk_n'+k_g^2\tau_1-k_n^2\tau_1-k_nk_g')\Big[(k_gk_n''+2k_gk_g'\tau_1+k_g^2\tau_1'-2k_nk_n'\tau_1-k_n^2\tau_1'-k_nk_g'')\\
&(k_g^2-k_n^2)(k_n\tau_2+k_g\tau_g)-(k_gk_n'+k_g^2\tau_1-k_n^2\tau_1-k_nk_g')\Big((k_gk_g'-k_nk_n')(k_n\tau_2+k_g\tau_g)\\
&+(k_g^2-k_n^2)(k_n'\tau_2+k_n\tau_2'+k_g'\tau_g+k_g\tau_g')\Big)\Big]-k_g(k_gk_g'-k_nk_n')(k_n\tau_2+k_g\tau_g)^3(k_g^2-k_n^2)\\
&+k_g(k_gk_g'-k_nk_n')(k_n\tau_2+k_g\tau_g)(k_gk_n'+k_g^2\tau_1-k_n^2\tau_1-k_nk_g')^2+\Big((k_n\tau_2+k_g\tau_g)(k_g'+\\
&k_n\tau_1)
+\tau_2k_gk_n'+\tau_2k_g^2\tau_1-\tau_2k_n^2
\tau_1-\tau_2k_nk_g'\Big)(k_g^2-k_n^2)^2(k_n\tau_2+k_g\tau_g)^2-\Big((k_g'+k_n\tau_1)\\
&(k_n\tau_2+k_g\tau_g)
+\tau_2k_gk_n'+\tau_2k_g^2\tau_1-\tau_2k_n^2\tau_1
-\tau_2k_nk_g'\Big)(k_g^2-k_n^2)(k_gk_n'+k_g^2\tau_1-k_n^2\tau_1-\\
&k_nk_g')^2\Bigg)(s),
\end{align*}
\begin{align*}
D(s)=&\Bigg((k_n\tau_2+k_g\tau_g)\sqrt{(k_g^2-k_n^2)^3}\Big((k_g^2-k_n^2)(k_n\tau_2+k_g\tau_g)^2-(k_gk_n'+k_g^2\tau_1-k_n^2\tau_1-k_nk_g')^2\Big)\Bigg)(s),
\end{align*}
\begin{align*}
B(s)=&\Bigg(k_n(k_gk_n'+k_g^2\tau_1-k_n^2\tau_1-k_nk_g')\Big[(k_gk_n''+2k_gk_g'\tau_1+k_g^2\tau_1'-2k_nk_n'\tau_1-k_n^2\tau_1'-k_nk_g'')\\
&(k_g^2-k_n^2)(k_n\tau_2+k_g\tau_g)-(k_gk_n'+k_g^2\tau_1-k_n^2\tau_1-k_nk_g')\Big((k_gk_g'-k_nk_n')(k_n\tau_2+k_g\tau_g)\\
&+(k_g^2-k_n^2)(k_n'\tau_2+k_n\tau_2'+k_g'\tau_g+k_g\tau_g')\Big)\Big]-k_n(k_gk_g'-k_nk_n')(k_n\tau_2+k_g\tau_g)^3(k_g^2-k_n^2)\\
&+k_n(k_gk_g'-k_nk_n')(k_n\tau_2+k_g\tau_g)(k_gk_n'+k_g^2\tau_1-k_n^2\tau_1-k_nk_g')^2+(k_gk_n\tau_1\tau_2
+k_nk_n'\tau_2\\
&+\tau_gk_n^2\tau_1+\tau_gk_nk_g')
(k_g^2-k_n^2)^2(k_n\tau_2+k_g\tau_g)-(k_gk_n\tau_1\tau_2+k_nk_n'\tau_2+\tau_gk_n^2\tau_1+\tau_gk_nk_g')
\\
&(k_g^2-k_n^2)
(k_gk_n'+k_g^2\tau_1-k_n^2\tau_1-k_nk_g')^2\Bigg)(s),
\end{align*}

\begin{align*}
C(s)=&\Bigg(-(k_n\tau_2+k_g\tau_g)(k_gk_n'+k_g^2\tau_1-k_n^2\tau_1-k_nk_g')^2(k_g\tau_2+k_n\tau_g)-(k_gk_n'+k_g^2\tau_1-k_n^2\tau_1\\
&-k_nk_g')
(k_n\tau_2+k_g\tau_g)^2(k_gk_g'-k_nk_n')+(k_gk_n'+k_g^2\tau_1-k_n^2\tau_1-k_nk_g')(k_n'\tau_2+k_n\tau_1\tau_g+\\
&k_g'\tau_g+
k_g\tau_1\tau_2)(k_g^2-k_n^2)(k_n\tau_2+k_g\tau_g)\Bigg)(s)\,\,\,\,\,\,\,\hbox{and}\\
D_1(s)=&\Bigg(\sqrt{k_g^2-k_n^2}(k_n\tau_2+k_g\tau_g)\Big((k_n^2-k_g^2)(k_n\tau_2+k_g\tau_g)^2-(k_gk_n'+k_g^2\tau_1-k_n^2\tau_1-k_nk_g')^2\Big)\Bigg)(s).
\end{align*}

Furthermore, reorganizing the calculations in $A(s)$, $B(s)$ and $C(s)$,  we show that $A(s)=B(s)=C(s)=0$  for all  $s\in I$ and thus  $g_i(s)=0$, $i=1,2,3$, for all $s\in I$, ( i.e.,   $g'(s)=0$ for all $s\in I$), so  that $g$ is  constant and the statement (3) follows.
For the converse, we assume that  $\langle\gamma(s),v\rangle=c$ for a constant vector $v$ and a real number $c$, thus  $\langle\gamma'(s),v\rangle=0$, that is, $(h_t^T)_v(s)=0$ for all $s$. By this way, we have
${({h^{T}_t})}_{v}(s)={({h^{T}_t})}_{v}'(s)={({h^{T}_t})}_{v}''(s)={({h^{T}_t})}_{v}^{'''}(s)=0$ for all $s$. By Proposition \ref{singht}, we have that  $v=S_{\gamma}(s,\theta(s))$ and $\rho(s)=0$ for all $s$. So (1) follows.
\end{proof}

In the Proposition \ref{pro:slice1} the invariant $\rho\equiv0$ means that the curve $\gamma$ is part of a spacelike slice surface. For the next result we assume that $\rho\not\equiv0$, that is $\gamma$ is not part of any spacelike slice surface $M\cap HP(v_0,c)$.

We now consider the hyperbolic curve $CH_{\gamma}$ of $\gamma$, which was defined in Section \ref{sectionht}. We define $C(2,3,4)=\{(t^2,t^3,t^4)\,\mid\,t\in\mathbb{R}\}$, which is called a $(2,3,4)$-\emph{cusp}. We have the following result.
\begin{prop}\label{prop:contato1}
Let $\gamma:I\rightarrow M$ be a unit speed curve  such that $k_g(s)\neq0$, $(k_n\tau_2+k_g\tau_g)(s)\neq0$ and $k_g^2(s)>k_n^2(s)$ for any $s\in I$. Let $v_0=S_{\gamma}(s_0,\theta_0)$ and $c=\langle \gamma(s_0),v_0\rangle$. Then we have the following:
\begin{itemize}
\item[(1)] $\gamma$ and the spacelike slice surface $M\cap HP(v_0,c)$ have contact of at least order 3 at $s_0$ if and only if ${({h^{T}_t})}_{v_0}$ has $A_{k}$-singularity at $s_0$, $k\geq 2$. Furthermore, if  $\gamma$ and the spacelike slice surface $M\cap HP(v_0,c)$ have contact of  order exactly 3 at $s_0$, then the  hyperbolic curve $CH_{\gamma}$ of $\gamma$ is, at  $s_0$, locally diffeomorphic to a line.
\item[(2)] $\gamma$ and the spacelike slice surface $M\cap HP(v_0,c)$  have contact of order 4 at $s_0$ if and only if $ {({h^{T}_t})}_{v_0}$ has $A_{3}$-singularity at $s_0$. In this case, if  $\lambda_0(s_0)\neq0$ then, the  hyperbolic curve $CH_{\gamma}$ of $\gamma$ is, at  $s_0$, locally diffeomorphic to the $(2,3,4)$-\emph{cusp} $C(2,3,4)$.
\end{itemize}
\end{prop}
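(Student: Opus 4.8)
The plan is to reduce the contact‐order statement to the singularity type of the family member $(h^{T}_t)_{v_0}$ via Definition \ref{def:contact}, and then to analyze the hyperbolic curve $CH_\gamma$ directly. First I would set $g(s)=\langle \gamma'(s),v_0\rangle = (h^{T}_t)_{v_0}(s)$, and observe that since the spacelike slice $M\cap HP(v_0,c)$ is locally cut out (as a submersion of $\mathbb{R}^4_1$) by $F(x)=\langle x,v_0\rangle-c$, the contact function of $\gamma$ with the slice is $\tilde g(s)=F\circ\gamma(s)=\langle \gamma(s),v_0\rangle-c$. The chosen normalization $c=\langle\gamma(s_0),v_0\rangle$ guarantees $\tilde g(s_0)=0$, and $\tilde g'(s)=\langle\gamma'(s),v_0\rangle=(h^{T}_t)_{v_0}(s)$. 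Hence $\tilde g^{(j+1)}(s_0)=(h^{T}_t)_{v_0}^{(j)}(s_0)$ for all $j\ge 0$, so an $A_k$-singularity of $(h^{T}_t)_{v_0}$ at $s_0$ corresponds exactly to contact of order $k+1$ between $\gamma$ and the slice. This is the key bookkeeping step: $v_0=S_\gamma(s_0,\theta_0)$ already forces $\tilde g'(s_0)=\tilde g''(s_0)=0$ by Proposition \ref{singht}(2), i.e.\ contact of at least order $2$; contact of at least order $3$ is then equivalent to $\tilde g'''(s_0)=(h^{T}_t)_{v_0}''(s_0)=0$, which by Proposition \ref{singht}(3) is the $A_2$-condition, and contact of order exactly $4$ is the $A_3$-condition from Proposition \ref{singht}(4). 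This yields the equivalences in both (1) and (2).

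For the second half of each assertion I would study the hyperbolic curve $CH_\gamma(s)=S_\gamma(s,\theta(s))$ near $s_0$. The intrinsic object to compute is the degeneracy of the map $s\mapsto CH_\gamma(s)$, and the natural route is to use the derivative formula for $S_\gamma(s,\theta(s))$ already worked out in the proof of Proposition \ref{pro:slice1}, namely
\begin{align*}
\frac{d\,CH_\gamma}{ds}(s)=\frac{-\cosh\theta\,(a\,n_\gamma+b\,n_1+c\,n_2)\,\rho}{\sqrt{k_g^2-k_n^2}\,(k_n\tau_2+k_g\tau_g)\big((k_g^2-k_n^2)(k_n\tau_2+k_g\tau_g)^2-\lambda_0^2\big)}(s),
\end{align*}
with $a,b,c$ as defined there. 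Since the scalar prefactor is, up to nonvanishing factors, a nonzero multiple of $\rho(s)$ whenever the vector $a\,n_\gamma+b\,n_1+c\,n_2$ does not degenerate, the order of vanishing of $CH_\gamma'(s)$ at $s_0$ is governed by the order of vanishing of $\rho$ at $s_0$. In case (1), contact of order exactly $3$ means $(h^{T}_t)_{v_0}$ has an $A_2$-singularity, hence by Proposition \ref{singht}(4) $\rho(s_0)\ne 0$; then $CH_\gamma'(s_0)\ne 0$ and $CH_\gamma$ is a regular (immersed) curve, locally diffeomorphic to a line. In case (2), the $A_3$-condition gives $\rho(s_0)=0$ and $\rho'(s_0)\ne 0$ (the $A_3$ versus $A_4$ dichotomy, as recorded before Proposition \ref{prop:defnova}), so $\rho$ vanishes to first order; combined with the hypothesis $\lambda_0(s_0)\ne 0$ one gets the precise $(2,3,4)$-cusp singularity type.

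The main obstacle is the last claim, that $CH_\gamma$ is diffeomorphic to $C(2,3,4)=\{(t^2,t^3,t^4)\}$ rather than to an ordinary $(2,3)$-cusp. Establishing that $CH_\gamma$ lies in some $3$-space and has a $(2,3)$-type first singularity is easy from $\rho(s_0)=0,\ \rho'(s_0)\ne0$; the subtlety is the fourth-order datum. I would compute the Taylor expansion of $CH_\gamma(s)$ at $s_0$ up to order $4$ in a suitable adapted frame: the leading singular term is quadratic (coming from $\rho\sim\rho'(s_0)(s-s_0)$ integrated once against the velocity), and I must verify that the successive derivative vectors of $CH_\gamma$ at $s_0$ span the tangent directions in the ordering $2,3,4$, i.e.\ that the curve is \emph{finitely determined} of type $A$ with precisely these exponents and no accidental further degeneracy in the cubic term. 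This is where the condition $\lambda_0(s_0)\ne0$ enters decisively: it ensures the coefficient governing the cubic direction is nonzero, so that the image is not a planar $(2,3)$-cusp but genuinely the spatial $(2,3,4)$-cusp. The verification is a direct but lengthy Taylor computation using the Frenet–Serret formulae together with $\rho(s_0)=0$; I would carry it out by expanding $a\,n_\gamma+b\,n_1+c\,n_2$ and $\theta(s)$ to the needed order and exhibiting a local diffeomorphism of the source matching the normal form, invoking the standard fact that a space curve with velocity, acceleration, and jerk of vanishing orders exactly $1,2,3$ is right–left equivalent to $C(2,3,4)$.
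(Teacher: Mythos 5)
Your reduction of the contact order to the singularity type of $(h^{T}_t)_{v_0}$, via $\tilde g'(s)=(h^{T}_t)_{v_0}(s)$ and Definition \ref{def:contact}, is exactly the paper's argument. For the ``line'' claim in (1) you take a genuinely different and valid route: the paper invokes Theorem \ref{teo:classificacao} to conclude that $S_\gamma$ is locally diffeomorphic to a cuspidal edge and then uses that an ambient diffeomorphism carries the singular locus of $S_\gamma$ (which is $CH_\gamma$, by Theorem \ref{teo:classificacao}(1)) to the singular locus of the cuspidal edge, a line; you instead argue directly from the formula $CH_\gamma'=(\hbox{nonvanishing scalar})\cdot\rho\cdot(a\,n_\gamma+b\,n_1+c\,n_2)$ of Proposition \ref{pro:slice1} that $\rho(s_0)\neq0$ forces $CH_\gamma'(s_0)\neq0$, hence $CH_\gamma$ is an immersed curve. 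This is sound --- note $c(s)=(k_g^2-k_n^2)(k_n\tau_2+k_g\tau_g)(s)\neq0$ guarantees the vector factor never vanishes --- and it is more elementary than the paper's argument for this part.

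The genuine gap is in part (2). The paper obtains the $(2,3,4)$-cusp with no Taylor computation: under the $A_3$ condition and $\lambda_0(s_0)\neq0$, Proposition \ref{desdobramentohtT} gives versality of $H^{T}_t$, Theorem \ref{teobruce1} then identifies the discriminant $S_\gamma$ locally with the swallowtail, and since the singular value set of the swallowtail is $C(2,3,4)$ and diffeomorphisms match singular loci with singular loci, $CH_\gamma$ is locally diffeomorphic to $C(2,3,4)$. Your plan is instead to show directly that $CH_\gamma$ has Taylor type exactly $(2,3,4)$: writing $CH_\gamma'=\rho\,W$ with $W$ nonvanishing and $\rho(s_0)=0\neq\rho'(s_0)$, one must prove that $CH_\gamma''(s_0)$, $CH_\gamma'''(s_0)$, $CH_\gamma^{(4)}(s_0)$ --- equivalently $W(s_0)$, $W'(s_0)$, $W''(s_0)$ --- are linearly independent. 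You never establish this; you only assert that ``$\lambda_0(s_0)\neq0$ ensures the coefficient governing the cubic direction is nonzero.'' That assertion is precisely the mathematical content of the claim, and it is not immediate: $\lambda_0(s_0)\neq0$ enters the paper as the condition $\det A\neq0$ making $H^{T}_t$ a versal deformation, and translating it into the required independence of derivative vectors of $CH_\gamma$ demands either the long computation you defer or a re-derivation of the swallowtail structure --- which is exactly what the versality machinery is designed to bypass. The recognition fact you quote (a curve germ with vanishing first derivative and linearly independent second, third and fourth derivatives is diffeomorphic to $C(2,3,4)$) is correct, but it is the easy half; as written, part (2) of your proposal is a plan rather than a proof.
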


\begin{proof}
  Consider $v_0=S_{\gamma}(s_0,\theta_0)$ and $c=\langle \gamma(s_0),v_0\rangle$. Let $D_{v_0}:M\rightarrow \mathbb{R}$ be a function defined by $D_{v_0}(x)=\langle x,v_0\rangle-c$. Thus, we have that $D_{v_0}^{-1}(0)=M\cap HP(v_0,c)$, which is a spacelike slice surface. Furthermore, $D_{v_0}^{-1}(0)$ and $\gamma$ have contact of at least order 3 at $s_0$ if and only if  the function $g(s)=D_{v_0}\circ \gamma(s)=\langle\gamma(s_0),v_0\rangle-c$ satisfies $g(s_0)=g'(s_0)=g''(s_0)=g'''(s_0)=0$. These conditions are equivalent to  $g(s_0)={({h^{T}_t})}_{v}(s)={({h^{T}_t})}_{v}'(s)={({h^{T}_t})}_{v}''(s)=0$. By Proposition \ref{singht}, they are equivalent to the condition that

 $$v_0=\dfrac{\cosh \theta_0}{\sqrt{k_g^2(s_0)-k_n^2(s_0)}}\left(k_g(s_0)n_{\gamma}(s_0)+k_n(s_0)n_1(s_0)\right)+\sinh \theta_0 n_2(s_0),$$  $\tanh \theta_0=\dfrac{k_gk'_n+k_g^2\tau_1-k_n^2\tau_1-k_nk'_g}{(k_n\tau_2+k_g\tau_g)\sqrt{k_g^2-k_n^2}}(s_0)$.
  If  $\gamma$ and the spacelike slice surface $M\cap HP(v_0,c)$  have contact of  order 3 at $s_0$, then we have that $$v_0=\dfrac{\cosh \theta_0}{\sqrt{k_g^2(s_0)-k_n^2(s_0)}}\left(k_g(s_0)n_{\gamma}(s_0)+k_n(s_0)n_1(s_0)\right)+\sinh \theta_0 n_2(s_0),$$  $\tanh \theta_0=\dfrac{k_gk'_n+k_g^2\tau_1-k_n^2\tau_1-k_nk'_g}{(k_n\tau_2+k_g\tau_g)\sqrt{k_g^2-k_n^2}}(s_0)$ and $\rho(s_0)\neq0.$ Furthermore, by Theorem \ref{teo:classificacao},  the germ of the image of the hyperbolic surface $S_{\gamma}$  at $(s_0, \theta_0)$ is locally diffeomorphic to the cuspidal edge. Since the locus of the singularities of cuspidal edge  is locally diffeomorphic to a line, the assertion (1) holds.

  The first part of (2) follows from  assertions (4) and (5) of Proposition \ref{singht}. For the second part, if  $\gamma$ and the spacelike slice surface $M\cap HP(v_0,c)$  have contact of  order 4 at $s_0$, then we have that $$v_0=\dfrac{\cosh \theta_0}{\sqrt{k_g^2(s_0)-k_n^2(s_0)}}\left(k_g(s_0)n_{\gamma}(s_0)+k_n(s_0)n_1(s_0)\right)+\sinh \theta_0 n_2(s_0),$$ $\tanh \theta_0=\dfrac{k_gk'_n+k_g^2\tau_1-k_n^2\tau_1-k_nk'_g}{(k_n\tau_2+k_g\tau_g)\sqrt{k_g^2-k_n^2}}(s_0)$, $\rho(s_0)=0$ and $\rho'(s_0)\neq0.$ Furthermore we have the assumption that $\lambda_0(s_0)\neq0$. By Theorem \ref{teo:classificacao}, the germ of the image of the  hyperbolic surface $S_{\gamma}$  at $(s_0, \theta_0)$ is locally diffeomorphic to the swallowtail surface. Since the locus of singularities of swallowtail surface is locally diffeomorphic to the $C(2,3,4)$, the assertion (2) holds.

\end{proof}

\section{Examples}\label{sec:exh3}
In this section, we consider two examples of curves on spacelike hypersurface $M$ in $\mathbb{R}^4_1$. One of them is $M=\mathbb{R}^3$ another is  $M=H^3(-1)$, which is the hyperbolic space.

\begin{ex}\rm\label{ex:1}
We consider $M=\mathbb{R}^3=\{x=(x_0,x_1,x_2,x_3)\in\mathbb{R}^4_1\mid x_0=0\}$. For $\gamma:I\rightarrow \mathbb{R}^3$, we have  $n_{\gamma}=e_0$, $t(s)=\gamma'(s)$, $n_1(s)=n(s)$ and $n_2(s)=b(s)$. Here $\{t, n, b\}$ is the ordinary Frenet frame. In this case, $k_n=\tau_1=\tau_2=0$, $k_g=k$ and $\tau_g=\tau$. The Frenet-Serret type formulae are the original Frenet-Serret formulae (see \cite{bruce}):
$$
\left\{
  \begin{aligned}
     e_{0}'(s) &=0, \\
     t'(s) & =k(s)\,n(s),\\
 n'(s) &= -k(s)\,t(s)+\tau(s)\,b(s),\\
  b'(s) &= -\tau(s)\,n(s).
  \end{aligned}
\right.$$

The hyperbolic surface of  $\gamma$ in $H^{3}(-1)\subset \mathbb{R}^4_1$ is given by  $$S_{\gamma}(s,\theta)=\cosh\theta e_0+\sinh\theta b(s)$$ and the hyperbolic curve of $\gamma$ is given by $CH_{\gamma}(s)=e_0$, which is a  constant point.
\end{ex}

\begin{ex}\rm\label{ex:2}
We consider $M=H^3(-1)$. For $\gamma:I\rightarrow H^3(-1)$, we have $n_{\gamma}(s)=\gamma(s)$, $t(s)=\gamma'(s)$, $n_1(s)$ and $n_2(s)$. Here $\{\gamma, t,n_1,n_2\}$ is the pseudo orthonormal frame. In this case, $k_n(s)=1$, $\tau_1(s)=\tau_2(s)=0$, $k_g(s)=k_h(s)$ and $\tau_g(s)=\tau_h(s)$.
$$
\left\{
  \begin{aligned}
     \gamma'(s) &=t(s), \\
     t'(s) & =\gamma(s)+k_h(s)\,n_1(s),\\
 n_1'(s) &= -k_h(s)\,t(s)+\tau_h(s)\,n_2(s),\\
  n_2'(s) &= -\tau_h(s)\,n_1(s).
  \end{aligned}
\right.$$
Therefore, for $k_h^2(s)>1$ the hyperbolic surface of $\gamma$ is given by
 $$S_{\gamma}(s,\theta)=\dfrac{\cosh \theta }{\sqrt{k_h^2(s)-1}}(k_h(s)\gamma(s)+n_1(s))+\sinh \theta n_2(s).$$
 Then the hyperbolic surface is precisely the hyperbolic focal surface of  $\gamma$ given in \cite{izumyiafs}.
\end{ex}

\section{ Spacelike tangential height functions}\label{sectionhs}

In this section we introduce the family of spacelike tangential height functions on a curve in a spacelike hypersurface $M$. Furthermore, we define and study the de Sitter surface which is  given by the discriminant set of this family. The arguments and results here are analogous to those of Section \ref{sectionht}, so that we do not present the detailed arguments in this section.

We define a family of  functions on a curve, $\gamma:I\rightarrow M\subset\mathbb{R}^4_1$ as follows:
$$H^{S}_t:I\times S^3_1\rightarrow \mathbb{R};\,\,\,\,(s,v)\mapsto\langle t(s),v\rangle.$$
We call  $H^{S}_t$ the \emph{family of spacelike tangential height functions of $\gamma$}. We denote $({h^{S}_t})_{v}(s)=H^{S}_t(s,v)$  for any fixed $v\in S^3_1$. The family  $H^{S}_t$ measures the contact of the curve $t$ with timelike hyperplanes in $\mathbb{R}^4_1$. Generically this contact can be of order $k$, $k=1,2,3$.

The conditions that  characterize the $A_k$-singularities, $k=1,2,3$  can be obtained in Proposition \ref{singhs}.

 We assume that  $k_n^2(s)> k_g^2(s)$ for $s\in I$. Furthermore, in order to avoid  more complicated situations we assume that $(k_n\tau_2+k_g\tau_g)(s)\neq0$ for any $s\in I$.

\begin{prop}\label{singhs}
Let $\gamma:I\rightarrow M$ be a unit speed curve such that $k_g(s)\neq0$, $(k_n\tau_2+k_g\tau_g)(s)\neq0$ and $k_n^2(s)> k_g^2(s)$. Then, we have the following:
\begin{itemize}
\item[(1)] ${({h^{S}_t})}_{v}(s)=0$ if and only if  there exist $\mu$, $\lambda$, $\eta \in\mathbb{R}$ such that $-\mu^2+\lambda^2+\eta^2=1$ and  $v=\mu n_{\gamma}(s)+\lambda n_{1}(s)+\eta n_2(s)$.

\item[(2)] ${({h^{S}_t})}_{v}(s)={({h^{S}_t})}_{v}'(s)=0$ if and only if there exists $\theta\in\mathbb{R}$ such that $$v=\frac{\cos \theta}{\sqrt{k_n^2(s)-k_g^2(s)}}\left(k_g(s)n_{\gamma}(s)+k_n(s)n_1(s)\right)+\sin\theta n_2(s).$$

\item[(3)] ${({h^{S}_t})}_{v}(s)={({h^{S}_t})}_{v}'(s)={({h^{S}_t})}_{v}''(s)=0$ if and only if $$v=\frac{\cos \theta}{\sqrt{k_n^2(s)-k_g^2(s)}}\left(k_g(s)n_{\gamma}(s)+k_n(s)n_1(s)\right)+\sin \theta n_2(s)$$ and $\tan \theta=\dfrac{k_gk'_n+k_g^2\tau_1-k_n^2\tau_1-k_nk'_g}{(k_n\tau_2+k_g\tau_g)\sqrt{k_n^2-k_g^2}}(s)$.

\item[(4)]  ${({h^{S}_t})}_{v}(s)={({h^{S}_t})}_{v}'(s)={({h^{S}_t})}_{v}''(s)={({h^{S}_t})}_{v}^{'''}(s)=0$ if and only if $$v=\frac{\cos \theta}{\sqrt{k_n^2(s)-k_g^2(s)}}\left(k_g(s)n_{\gamma}(s)+k_n(s)n_1(s)\right)+\sin \theta n_2(s),$$ $\tan \theta=\dfrac{k_gk'_n+k_g^2\tau_1-k_n^2\tau_1-k_nk'_g}{(k_n\tau_2+k_g\tau_g)\sqrt{k_n^2-k_g^2}}(s)$ and $\rho(s)=0$, where

    $\rho(s)=\Bigg((-k_gk''_n-k_gk_n\tau_2^2-2k_gk'_g\tau_1-k_g^2\tau'_1-k_g^2\tau_g\tau_2+2k_nk'_n\tau_1+k_n^2\tau'_1-k_n^2k_g\tau_2+k''_gk_n-k_gk_n\tau_g^2)
    (k_n\tau_2+k_g\tau_g)+(k_gk'_n+k_g^2\tau_1-k_n^2\tau_1-k_nk'_g)(2k'_n\tau_2+k_n\tau_1\tau_g+k_n\tau'_2+2k'_g\tau_g+k_g\tau_1\tau_2+k_g\tau'_g)\Bigg)(s)$.

    \item[(5)]  ${({h^{S}_t})}_{v}(s)={({h^{S}_t})}_{v}'(s)={({h^{S}_t})}_{v}''(s)={({h^{S}_t})}_{v}^{'''}(s)={({h^{S}_t})}_{v}^{(4)}(s)=0$ if and only if $$v=\frac{\cos \theta}{\sqrt{k_n^2(s)-k_g^2(s)}}\left(k_g(s)n_{\gamma}(s)+k_n(s)n_1(s)\right)+\sin \theta n_2(s),$$ $\tan \theta=\dfrac{k_gk'_n+k_g^2\tau_1-k_n^2\tau_1-k_nk'_g}{(k_n\tau_2+k_g\tau_g)\sqrt{k_n^2-k_g^2}}(s)$ and  $\rho(s)=\rho'(s)=0$.
\end{itemize}
\end{prop}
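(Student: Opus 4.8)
The plan is to compute the successive derivatives of the spacelike tangential height function $(h^S_t)_v(s) = \langle t(s), v \rangle$ and impose that each vanishes, exactly paralleling the computation for the timelike case in Proposition \ref{singht}. The only structural difference is that $v$ now lies in the de Sitter space $S^3_1$ (so $\langle v,v\rangle = +1$ instead of $-1$), which forces the spacelike-type normalization coefficient $-\mu^2+\lambda^2+\eta^2 = 1$ and replaces the hyperbolic parametrization $(\cosh\theta,\sinh\theta)$ by the circular one $(\cos\theta,\sin\theta)$.

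**First** I would verify (1): since $t(s)$ is a unit spacelike vector orthogonal to each of $n_\gamma$, $n_1$, $n_2$, the condition $\langle t(s),v\rangle = 0$ is equivalent to $v$ lying in $\mathrm{span}\{n_\gamma, n_1, n_2\}$, say $v = \mu n_\gamma + \lambda n_1 + \eta n_2$; imposing $v \in S^3_1$ gives $\langle v,v\rangle = -\mu^2+\lambda^2+\eta^2 = 1$. **For (2)** I would differentiate and use the Frenet--Serret formula $t'(s) = k_n n_\gamma + k_g n_1$ to get $(h^S_t)'_v(s) = \langle t'(s),v\rangle = -\mu k_n + \lambda k_g$. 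Combined with the constraint from (1), the two equations $-\mu k_n + \lambda k_g = 0$ and $-\mu^2+\lambda^2+\eta^2=1$ are solved—using $k_n^2 > k_g^2$ so the relevant quantity is positive—by setting $\mu = \dfrac{\cos\theta\, k_g}{\sqrt{k_n^2-k_g^2}}$, $\lambda = \dfrac{\cos\theta\, k_n}{\sqrt{k_n^2-k_g^2}}$, $\eta = \sin\theta$, which yields the stated formula for $v$.

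**For (3)** I would compute $t''(s)$ via the Frenet--Serret relations, obtaining $t''(s) = (k_n^2 - k_g^2)t + (k'_n + k_g\tau_1)n_\gamma + (k_n\tau_1 + k'_g)n_1 + (k_n\tau_2 + k_g\tau_g)n_2$, exactly as recorded in the proof of Proposition \ref{singht}. Substituting the expression for $v$ from (2) and using orthonormality, the equation $(h^S_t)''_v(s) = \langle t''(s),v\rangle = 0$ becomes a linear relation between $\cos\theta$ and $\sin\theta$, which rearranges into $\tan\theta = \dfrac{k_gk'_n + k_g^2\tau_1 - k_n^2\tau_1 - k_nk'_g}{(k_n\tau_2 + k_g\tau_g)\sqrt{k_n^2-k_g^2}}(s)$; the hypothesis $(k_n\tau_2 + k_g\tau_g)(s)\neq 0$ guarantees the denominator is nonzero. **For (4) and (5)** I would continue to the third and fourth derivatives of $\langle t(s),v\rangle$; these require differentiating $t''$ and $t'''$ through the Frenet--Serret frame and then contracting against the now-fixed vector $v$. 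The vanishing of $(h^S_t)'''_v$ produces precisely the invariant $\rho(s) = 0$ (with $\rho$ the same polynomial expression in the curvatures and torsions as in the timelike case, since the defining contraction is formally identical), and the vanishing of $(h^S_t)^{(4)}_v$ adds $\rho'(s) = 0$.

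**The main obstacle** is purely computational: the expressions for $t'''$ and $t^{(4)}$ expand into long combinations of the five invariants and their derivatives, so verifying that the contraction $\langle t'''(s),v\rangle$ collapses exactly to the displayed $\rho$—and that the extra term in $\langle t^{(4)}(s),v\rangle$ reduces to $\rho'$ once the lower-order conditions are enforced—demands careful bookkeeping. Since the entire argument mirrors Proposition \ref{singht} with $\cosh,\sinh$ replaced by $\cos,\sin$ and the sign of $\langle v,v\rangle$ reversed, I would, following the style already adopted in the excerpt, carry out items (1)--(3) explicitly and remark that the calculations for (4) and (5) are analogous and are omitted.
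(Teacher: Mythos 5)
Your proposal is correct and takes essentially the same approach as the paper: the paper carries out exactly this computation to prove Proposition \ref{singht} and then states Proposition \ref{singhs} without further argument, remarking that the spacelike case is analogous. Your explicit treatment of items (1)--(3) (decomposing $v$ in the normal frame with $\langle v,v\rangle=1$, the circular parametrization enabled by $k_n^2(s)>k_g^2(s)$, and contracting $t''$ against $v$ to get the $\tan\theta$ condition, with $(k_n\tau_2+k_g\tau_g)(s)\neq0$ ensuring this is well defined) matches the paper's timelike computation step for step, and your omission of the laborious expansions for (4) and (5) mirrors the paper's own practice.
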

%
%

Following  Proposition \ref{singhs}, we define the invariant \\$ \rho(s)=\Bigg((-k_gk''_n-k_gk_n\tau_2^2-2k_gk'_g\tau_1-k_g^2\tau'_1-k_g^2\tau_g\tau_2+2k_nk'_n\tau_1+k_n^2\tau'_1-k_n^2k_g\tau_2+k''_gk_n-k_gk_n\tau_g^2)
(k_n\tau_2+k_g\tau_g)+(k_gk'_n+k_g^2\tau_1-k_n^2\tau_1-k_nk'_g)(2k'_n\tau_2+k_n\tau_1\tau_g+k_n\tau'_2+2k'_g\tau_g+k_g\tau_1\tau_2+k_g\tau'_g)\Bigg)(s)$ \\of the curve $\gamma$.  We will study the geometric meaning of this invariant. Motivated by Proposition \ref{singhs}, we define the following surface and its singular locus.  Let $\gamma:I\rightarrow M$ be a  unit speed curve with $k_g(s)\neq0$, $k_n^2(s)> k_g^2(s)$ and $(k_n\tau_2+k_g\tau_g)(s)\neq0$, a  surface $DS_{\gamma}:I\times J\rightarrow S^3_1$ is defined by
$$DS_{\gamma}(s,\theta)=\frac{\cos \theta}{\sqrt{k_n^2(s)-k_g^2(s)}}\left(k_g(s)n_{\gamma}(s)+k_n(s)n_1(s)\right)+\sin \theta n_2(s),$$ where $J=[0,2\pi]$.
We call $DS_{\gamma}$ a \emph{ de Sitter surface} of $\gamma$. We now define $DC_{\gamma}=DS_{\gamma}(s,\theta(s))$, where $\tan\theta(s)=\dfrac{k_gk'_n+k_g^2\tau_1-k_n^2\tau_1-k_nk'_g}{\sqrt{k_n^2-k_g^2}(k_n\tau_2+k_g\tau_g)}(s)$.  We call  $DC_{\gamma}$ a \emph{de Sitter curve} of $\gamma$.  By Theorem \ref{teo:classificacao2} (1), this curve is the locus of the singular points of the de Sitter surface of $\gamma$

\begin{cor}\label{coro:discsupdesitter}
 The de Sitter surface of  $\gamma$ is the discriminant set $\mathcal{D}_{H_t^S}$  of the family of spacelike tangential height functions  $H^{S}_t$.
\end{cor}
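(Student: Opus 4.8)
The plan is to mirror exactly the proof of Corollary \ref{coro:discsuphiper}, since the de Sitter surface $DS_\gamma$ is defined from the family $H^S_t$ in precisely the same way that the hyperbolic surface $S_\gamma$ is defined from $H^T_t$. First I would recall the definition of the discriminant set from Section \ref{sec:pre}, namely
$$\mathcal{D}_{H^S_t}=\left\{v\in S^3_1 \,\,\mid\,\, H^S_t=\frac{\partial H^S_t}{\partial s}=0 \,\,\,\text{at}\,\,\, (s,v)\,\,\,\text{for some}\,\,\, s\in I\right\}.$$
By definition $H^S_t(s,v)=\langle t(s),v\rangle=(h^S_t)_v(s)$ and $\frac{\partial H^S_t}{\partial s}(s,v)=(h^S_t)_v'(s)$, so a point $v\in S^3_1$ lies in $\mathcal{D}_{H^S_t}$ precisely when there exists $s$ with $(h^S_t)_v(s)=(h^S_t)_v'(s)=0$.

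Next I would invoke Proposition \ref{singhs} (2), which states that the two conditions $(h^S_t)_v(s)=(h^S_t)_v'(s)=0$ hold if and only if there exists $\theta\in\mathbb{R}$ with
$$v=\frac{\cos \theta}{\sqrt{k_n^2(s)-k_g^2(s)}}\left(k_g(s)n_{\gamma}(s)+k_n(s)n_1(s)\right)+\sin\theta\, n_2(s).$$
The right-hand side is exactly $DS_\gamma(s,\theta)$. Hence the set of $v$ satisfying the discriminant condition for some $s$ coincides with the image of the parametrization $DS_\gamma:I\times J\to S^3_1$, i.e. $\mathcal{D}_{H^S_t}$ is the de Sitter surface of $\gamma$.

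Since this is a direct translation of the hyperbolic case, there is essentially no obstacle: the entire argument rests on Proposition \ref{singhs} (2), whose characterization is tailored precisely so that the discriminant condition reproduces the defining formula for $DS_\gamma$. The only point requiring minimal care is confirming that $v$ ranging over $S^3_1$ is consistent with the spacelike normalization (here $-\mu^2+\lambda^2+\eta^2=1$) built into Proposition \ref{singhs} (1), which guarantees that $DS_\gamma(s,\theta)\in S^3_1$ and that no points outside $S^3_1$ enter the discriminant set. Thus the proof can simply read: the claim follows from the definition of the discriminant set in Section \ref{sec:pre} together with Proposition \ref{singhs} (2).
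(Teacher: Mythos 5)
Your proposal is correct and matches the paper's own proof, which likewise consists of just citing the definition of the discriminant set from Section \ref{sec:pre} together with Proposition \ref{singhs} (2); you have merely spelled out the details the paper leaves implicit.
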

\begin{proof}
The proof follows from the definition of the discriminant set given in the Section \ref{sec:pre} and by  Proposition  \ref{singhs} (2).
\end{proof}

\begin{prop}\label{desdobramentohtS}
Let $\gamma:I\rightarrow M$ be a  unit speed curve with $k_g(s)\neq0$ and $(k_n\tau_2+k_g\tau_g)(s)\neq0$.
\begin{itemize}
  \item [(a)] If $(h^{S}_t)_{v_0}$ has an $A_2$-singularity at $s_0$, then $H^{S}_t$ is a versal deformation of $(h^{S}_t)_{v_0}$.
  \item [(b)] If $(h^{S}_t)_{v_0}$ has an $A_3$-singularity  at $s_0$ and $\lambda_0(s_0)\neq0$ (which is a generic condition), then $H^{S}_t$ is a versal deformation of $(h^{S}_t)_{v_0}$.
\end{itemize}

\end{prop}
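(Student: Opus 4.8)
The plan is to follow the proof of Proposition~\ref{desdobramentohtT} almost verbatim, replacing the hyperbolic functions by circular ones and the constraint $\langle v,v\rangle=-1$ by $\langle v,v\rangle=1$, while tracking the resulting sign changes. First I would write the family in coordinates as $H^{S}_t(s,v)=-v_0x'_0(s)+v_1x'_1(s)+v_2x'_2(s)+v_3x'_3(s)$ with $t(s)=(x'_0,x'_1,x'_2,x'_3)$; since now $v\in S^3_1$ satisfies $-v_0^2+v_1^2+v_2^2+v_3^2=1$, I would use the local chart $v_0=\sqrt{v_1^2+v_2^2+v_3^2-1}$, so that $\partial H^{S}_t/\partial v_i=x'_i(s)-(v_i/v_0)x'_0(s)$ for $i=1,2,3$, exactly as before. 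Before anything else I would record that this chart is legitimate at the relevant points: if $(h^{S}_t)_{v_0}$ has an $A_2$- or $A_3$-singularity at $s_0$, then by Proposition~\ref{singhs}(3) together with $(k_n\tau_2+k_g\tau_g)(s_0)\neq0$ the parameter $\theta_0$ satisfies $\tan\theta_0=\lambda_0/[(k_n\tau_2+k_g\tau_g)\sqrt{k_n^2-k_g^2}](s_0)$, which is finite; hence $\cos\theta_0\neq0$ and the time component $v_0$ of $v$ does not vanish.

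For part (a) I would form the $2\times3$ matrix $B$ of coefficients of the $1$-jets of the $\partial H^{S}_t/\partial v_i$ and show $\mathrm{rank}\,B=2$ by computing the Gram matrix of $\widetilde{B}=v_0B$ with rows $F$ and $G$. Using $\langle v,v\rangle=1$, $\langle t,t\rangle=1$, $\langle t,v\rangle=\langle t',v\rangle=0$ and $\langle t',t'\rangle=k_g^2-k_n^2$, the Euclidean inner products become $F\cdot F=v_0^2+(x'_0)^2$, $F\cdot G=x'_0x''_0$ and $G\cdot G=v_0^2(k_g^2-k_n^2)+(x''_0)^2$; these sign differences from the timelike case are precisely the footprint of $\langle v,v\rangle=+1$. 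After a Lorentzian motion placing $n_\gamma(s_0)=(1,0,0,0)$, so that $x'_0(s_0)=0$, $x''_0(s_0)=k_n(s_0)$ and $v_0=k_g(s_0)\cos\theta_0/\sqrt{k_n^2(s_0)-k_g^2(s_0)}$, the determinant of the Gram matrix collapses to $v_0^2\big(k_n^2(s_0)-k_g^2(s_0)\cos^2\theta_0\big)$, which is strictly positive since $k_n^2(s_0)>k_g^2(s_0)\ge k_g^2(s_0)\cos^2\theta_0$. Hence $B$ has rank $2$ and, by the versality criterion recalled in Section~\ref{sec:pre}, $H^{S}_t$ is a versal deformation of the $A_2$-singularity.

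For part (b) I would show that the $3\times3$ matrix $A$ of coefficients of the $2$-jets has nonzero determinant. Writing each entry of $A$ as $x_i^{(j)}-(v_i/v_0)x_0^{(j)}$ and expanding $\det A$ by multilinearity in its columns, the minors match the components of $\gamma'\wedge\gamma''\wedge\gamma'''$, and I expect to identify $\det A=\frac1{v_0}\langle v,(\gamma'\wedge\gamma''\wedge\gamma''')(s_0)\rangle$. Substituting the de Sitter surface expression for $v$ and reducing the triple product with the Frenet--Serret formulae should yield $\det A=\dfrac{\cos\theta_0\,\lambda_0^2}{v_0\sqrt{k_n^2-k_g^2}\,(k_n\tau_2+k_g\tau_g)}(s_0)$, where $\lambda_0(s)=(k_gk'_n+k_g^2\tau_1-k_n^2\tau_1-k_nk'_g)(s)$. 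Since $\cos\theta_0\neq0$ and $v_0\neq0$ by the preliminary remark, this is nonzero exactly when $\lambda_0(s_0)\neq0$; thus under the stated hypotheses $A$ has rank $3$ and $H^{S}_t$ is a versal deformation of the $A_3$-singularity.

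The routine but lengthiest step is the Frenet--Serret reduction of $\langle v,\gamma'\wedge\gamma''\wedge\gamma'''\rangle$ into the closed form above; this is the only genuinely computational obstacle, and it is underwritten by the strict analogy with the timelike computation in Proposition~\ref{desdobramentohtT}. The one conceptual point that must not be skipped is the observation that $\cos\theta_0\neq0$ at every $A_2$- or $A_3$-singularity, since this is what simultaneously makes the chart $v_0=\sqrt{v_1^2+v_2^2+v_3^2-1}$ valid and keeps both determinants away from $0$: unlike the one-sheeted hyperboloid $S^3_1$ in general, the singular locus never reaches the equator $\{v_0=0\}$ under the assumption $(k_n\tau_2+k_g\tau_g)\neq0$.
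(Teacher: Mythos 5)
Your part (a) is correct, and it is exactly the argument the paper intends (the paper gives no proof of this proposition, deferring to the timelike case, Proposition \ref{desdobramentohtT}): the Gram computation with the sign changes forced by $\langle v,v\rangle=+1$ is right, as is your preliminary remark that $\cos\theta_0\neq0$, which legitimizes the chart $v_0=\sqrt{v_1^2+v_2^2+v_3^2-1}$ and keeps $v_0\neq0$. The genuine gap is in part (b), at precisely the step you decline to carry out and justify ``by strict analogy'': the closed form you predict for $\det A$ is not what the Frenet--Serret reduction actually gives. Write $Q=k_n\tau_2+k_g\tau_g$ and $v=\alpha n_\gamma+\beta n_1+\delta n_2$. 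In the Darboux frame $\gamma'=t$, $\gamma''=k_n n_\gamma+k_g n_1$, $\gamma'''=(k_n'+k_g\tau_1)n_\gamma+(k_n^2-k_g^2)t+(k_n\tau_1+k_g')n_1+Q n_2$, so using your identity $\det A=\tfrac1{v_0}\langle v,\gamma'\wedge\gamma''\wedge\gamma'''\rangle=\tfrac1{v_0}\det(v,\gamma',\gamma'',\gamma''')$ and expanding in frame coordinates,
\begin{align*}
\det(v,\gamma',\gamma'',\gamma''')
&=\pm\det\begin{pmatrix}\alpha&\beta&\delta\\ k_n&k_g&0\\ k_n'+k_g\tau_1&k_n\tau_1+k_g'&Q\end{pmatrix}
=\pm\bigl(Q(\alpha k_g-\beta k_n)-\delta\lambda_0\bigr),
\end{align*}
the overall sign being $\det(n_\gamma,t,n_1,n_2)=\pm1$. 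Substituting the de Sitter values $\alpha k_g-\beta k_n=-\cos\theta_0\sqrt{k_n^2-k_g^2}$ and $\sin\theta_0=\lambda_0\cos\theta_0/\bigl(Q\sqrt{k_n^2-k_g^2}\bigr)$ yields
\begin{align*}
\det A=\mp\dfrac{\cos\theta_0\,\bigl(Q^2(k_n^2-k_g^2)+\lambda_0^2\bigr)}{v_0\,Q\,\sqrt{k_n^2-k_g^2}}(s_0),
\end{align*}
not $\cos\theta_0\,\lambda_0^2/\bigl(v_0Q\sqrt{k_n^2-k_g^2}\bigr)$. Consequently your concluding equivalence ``$\det A\neq0$ exactly when $\lambda_0(s_0)\neq0$'' is false: under the standing hypotheses ($Q\neq0$, $k_n^2>k_g^2$, $\cos\theta_0\neq0$) the determinant never vanishes, whatever $\lambda_0(s_0)$ is.

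The proposition is therefore still true --- in fact a stronger statement holds: $H^S_t$ is versal at every $A_3$-singularity, so the hypothesis $\lambda_0(s_0)\neq0$ is superfluous --- but your proof as written does not establish it, since all of part (b) rests on an identity the computation does not produce. Be aware that the analogy you invoke is an analogy with a flawed computation: the displayed value of $\det A$ in the paper's proof of Proposition \ref{desdobramentohtT} has the same defect (there the correct numerator is $Q^2(k_g^2-k_n^2)-\lambda_0^2$, which is strictly positive at singular points because $\tanh^2\theta_0<1$). A concrete cross-check in the present setting: since rank$\,A=3$ means that no nonzero combination $c_0t+c_1t'+c_2t''$ lies in $\mathbb{R}v_0$, versality at an $A_3$-point is equivalent to linear independence of $v_0,t(s_0),t'(s_0),t''(s_0)$ in $\mathbb{R}^4_1$. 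Take a curve in $H^3(-1)$ with $k_h<1$, $\tau_h$ constant, $k_h'(s_0)=0$, $k_h''(s_0)=k_h(s_0)\tau_h^2$ and $k_h'''(s_0)\neq0$; then $(h^S_t)_{v_0}$ has an $A_3$-singularity at $s_0$ with $\lambda_0(s_0)=-k_h'(s_0)=0$, yet a direct evaluation gives $\det\bigl(v_0,t,t',t''\bigr)(s_0)=\pm k_h\tau_h\sqrt{1-k_h^2}\neq0$, so the family is versal there. In short: if you perform the triple-product reduction honestly you obtain the proposition (and more); as written, the pivotal step of your part (b) fails.
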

In the case of the de Sitter surface we have an analogous result to Proposition \ref{prop:defnova}, just considering the deformation $\widetilde{H}:I\times S^3_1\times \mathbb{R}\rightarrow \mathbb{R}$ by $\widetilde{H}(s,v,u)=H^{S}_{t}(s,v)+u(s-s_0)^2=\langle t(s),v\rangle+u(s-s_0)^2.$
\begin{prop}\label{prop:defnova2}
If $(h^{S}_t)_{v_0}$  has an $A_3$-singularity at $s_0$ and $\lambda_0(s_0)=0$, then $\widetilde{H}$ is a  versal deformation of $(h^{S}_t)_{v_0}$.
\end{prop}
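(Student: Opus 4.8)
The plan is to follow the proof of Proposition \ref{prop:defnova} almost verbatim, changing only the constraint that defines $v_0$ and the causal character of $v$. First I would write the deformation explicitly as
$$\widetilde{H}(s,v,u)=-v_0x_0'(s)+v_1x_1'(s)+v_2x_2'(s)+v_3x_3'(s)+u(s-s_0)^2,$$
where now $v=(v_0,v_1,v_2,v_3)\in S^3_1$, so that $\langle v,v\rangle=1$ yields $v_0=\sqrt{v_1^2+v_2^2+v_3^2-1}$ on the chart where $v_0\neq0$. The effective parameters are $(v_1,v_2,v_3,u)$, hence $r=4$, and since $(h^{S}_t)_{v_0}$ has an $A_3$-singularity, the versality criterion from Section \ref{sec:pre} requires the associated $3\times4$ coefficient matrix of $2$-jets to have rank $k=3$.

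Since $\partial v_0/\partial v_i=v_i/v_0$ exactly as in the timelike case, I would obtain the identical formulas $\partial\widetilde{H}/\partial v_i=x_i'(s)-(v_i/v_0)x_0'(s)$ for $i=1,2,3$ and $\partial\widetilde{H}/\partial u=(s-s_0)^2$. Taking $2$-jets at $s_0$ and transposing, the rank condition becomes the linear independence of the three vectors
$$a(s_0)=\left(1,\,x_1'(s_0)-\tfrac{v_1}{v_0}x_0'(s_0),\,x_2'(s_0)-\tfrac{v_2}{v_0}x_0'(s_0),\,x_3'(s_0)-\tfrac{v_3}{v_0}x_0'(s_0)\right),$$
$$b(s_0)=\left(0,\,x_1''(s_0)-\tfrac{v_1}{v_0}x_0''(s_0),\,x_2''(s_0)-\tfrac{v_2}{v_0}x_0''(s_0),\,x_3''(s_0)-\tfrac{v_3}{v_0}x_0''(s_0)\right),$$
and $c(s_0)=(1,0,0,0)$, precisely the vectors appearing in Proposition \ref{prop:defnova}. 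This part is formally identical to the timelike case and requires no new computation.

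The one genuine difference, and the step I expect to be the only obstacle, is the verification of this linear independence. If $a(s_0),b(s_0),c(s_0)$ were dependent, the same elimination as in Proposition \ref{prop:defnova} would give $x_i'(s_0)=(v_i/v_0)x_0'(s_0)$ for $i=1,2,3$, hence $t(s_0)=(x_0'(s_0)/v_0)\,v$, i.e.\ $t(s_0)$ and $v$ are parallel. In the hyperbolic setting this was immediately absurd because $t$ is spacelike while $v$ is timelike; here, however, both $t(s_0)$ and $v$ lie in $S^3_1$ and are spacelike, so parallelism cannot be excluded on causal grounds alone. Instead I would invoke the $A_3$-singularity condition $(h^{S}_t)_{v_0}(s_0)=\langle t(s_0),v\rangle=0$ from Proposition \ref{singhs}(1): substituting $t(s_0)=(x_0'(s_0)/v_0)\,v$ gives $\langle t(s_0),v\rangle=(x_0'(s_0)/v_0)\langle v,v\rangle=x_0'(s_0)/v_0$, which must then vanish, forcing $t(s_0)=0$ and contradicting $\langle t(s_0),t(s_0)\rangle=1$. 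Therefore $a(s_0),b(s_0),c(s_0)$ are independent, the versality matrix has rank $3$, and $\widetilde{H}$ is a versal deformation of $(h^{S}_t)_{v_0}$.
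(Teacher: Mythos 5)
Your proposal is correct in outline and follows exactly the route the paper intends: the paper gives no separate proof of Proposition \ref{prop:defnova2}, declaring the de Sitter case analogous to Proposition \ref{prop:defnova}, and your choice of parameters $(v_1,v_2,v_3,u)$, the $2$-jet computation, and the rank-$3$ condition are the verbatim transcription of that proof. What your write-up adds is the identification of the one step where ``analogous'' is not automatic: the closing contradiction of Proposition \ref{prop:defnova} (``$t$ is spacelike and $v$ is timelike'') is unavailable for $v\in S^3_1$, and your replacement --- substitute $t(s_0)=(x_0'(s_0)/v_0)\,v$ into the condition $\langle t(s_0),v\rangle=0$ of Proposition \ref{singhs}(1) and use $\langle v,v\rangle=1$ to force $x_0'(s_0)=0$, hence $t(s_0)=0$, contradicting $\langle t(s_0),t(s_0)\rangle=1$ --- is correct.

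Two points should be tightened to make the argument airtight. First, linear dependence of $a(s_0)$, $b(s_0)$, $c(s_0)$ does not in general yield $t(s_0)\parallel v$: comparing first entries in a dependence relation and eliminating $c(s_0)$, one gets only that some nontrivial combination $\alpha t(s_0)+\beta t'(s_0)$ is proportional to $v$ (the sub-case $\beta=0$ is the one you, and the paper, treat). Your pairing idea closes this gap uniformly: pairing $\alpha t(s_0)+\beta t'(s_0)=\mu v$ with $v$, and using $\langle t(s_0),v\rangle=\langle t'(s_0),v\rangle=0$ (which hold at an $A_3$-singularity) together with $\langle v,v\rangle\neq0$, gives $\mu=0$; then $\alpha=\beta=0$ because $t(s_0)$ and $t'(s_0)=(k_nn_{\gamma}+k_gn_1)(s_0)$ are linearly independent, since $k_g(s_0)\neq0$. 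Note that this version works verbatim for Proposition \ref{prop:defnova} as well, with no appeal to causal character at all. Second, unlike $H^3(-1)$, where $v_0=\sqrt{1+v_1^2+v_2^2+v_3^2}$ is a global chart on the future sheet, your chart $v_0=\sqrt{v_1^2+v_2^2+v_3^2-1}$ on $S^3_1$ is valid only near points whose first coordinate is nonzero, and this must be checked at the center point of the germ: since $\lambda_0(s_0)=0$ forces $\sin\theta_0=0$, that point is $\pm\frac{1}{\sqrt{(k_n^2-k_g^2)(s_0)}}\left(k_gn_{\gamma}+k_nn_1\right)(s_0)$, and after the Lorentzian motion normalizing $n_{\gamma}(s_0)=(1,0,0,0)$ (already invoked in the proof of Proposition \ref{desdobramentohtT}) its first coordinate is $\pm k_g(s_0)/\sqrt{(k_n^2-k_g^2)(s_0)}\neq0$, so the chart is legitimate.
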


The  Propositions \ref{desdobramentohtS} and \ref{prop:defnova2} give us the following result.

\begin{theo}\label{teo:classificacao2}
Let $\gamma:I\rightarrow M$ be a  unit speed curve  such that $k_g(s)\neq0$, $k_n^2(s)> k_g^2(s)$ and  $(k_n\tau_2+k_g\tau_g)(s)\neq0$, and $DS_{\gamma}$ the  de Sitter surface of $\gamma$. Then we have the following:
\begin{itemize}
\item[(1)]  $DS_{\gamma}$ is singular at $(s_0, \theta_0)$ if and only if $$\tan \theta_0=\dfrac{k_gk'_n+k_g^2\tau_1-k_n^2\tau_1-k_nk'_g}{\sqrt{k_n^2-k_g^2}(k_n\tau_2+k_g\tau_g)}(s_0).$$
    That is, the singular points of the de Sitter surface are given by  $DS_{\gamma}(s)=DS_{\gamma}(s,\theta(s))$, where $\tan \theta(s)$ satisfies the above equation.
\item[(2)]    The germ of $DS_{\gamma}$ at $(s_0, \theta_0)$ is locally diffeomorphic to the cuspidal edge  if
     $$\tan \theta_0=\dfrac{k_gk'_n+k_g^2\tau_1-k_n^2\tau_1-k_nk'_g}{(k_n\tau_2+k_g\tau_g)\sqrt{k_n^2-k_g^2}}(s_0)\,\,\,\hbox{and}\,\, \rho(s_0)\neq0.$$
\item[(3)]  The germ of $DS_{\gamma}$ at $(s_0, \theta_0)$ is  locally diffeomorphic to the swallowtail if
       $$\tan \theta_0=\dfrac{k_gk'_n+k_g^2\tau_1-k_n^2\tau_1-k_nk'_g}{(k_n\tau_2+k_g\tau_g)\sqrt{k_n^2-k_g^2}}(s_0),\,\,\,\lambda_0(s_0)\neq0,\,\, \rho(s_0)=0\,\, \hbox{and}\,\, \rho'(s_0)\neq0.$$

    \item[(4)] The germ of $DS_{\gamma}$ at $(s_0, \theta_0)$ is diffeomorphic to a cuspidal beaks if    $$
    \lambda_0(s_0)=0,\,\,\,\lambda_1(s_0)\neq0,\,\,\,\rho(s_0)=0\,\,\, \mbox{and}\,\,\,\rho'(s_0)\neq0.$$

\item[(5)] A cuspidal lips does not appear.
\end{itemize}
\end{theo}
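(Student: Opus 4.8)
The plan is to follow, almost verbatim, the proof of Theorem~\ref{teo:classificacao}, replacing the hyperbolic quantities $\cosh\theta,\sinh\theta$ and $\sqrt{k_g^2-k_n^2}$ by their spacelike analogues $\cos\theta,\sin\theta$ and $\sqrt{k_n^2-k_g^2}$, and using the de Sitter counterparts of the auxiliary results. For assertion~(1) I would first differentiate
$$DS_{\gamma}(s,\theta)=\frac{\cos\theta}{\sqrt{k_n^2-k_g^2}}\bigl(k_g\,n_{\gamma}+k_n\,n_1\bigr)+\sin\theta\,n_2$$
with respect to $s$ and $\theta$, expressing $\partial DS_{\gamma}/\partial s$ and $\partial DS_{\gamma}/\partial\theta$ in the Lorentzian Darboux frame by means of the Frenet--Serret type formulae. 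Here $\partial DS_{\gamma}/\partial\theta=-\sin\theta\,(k_g n_{\gamma}+k_n n_1)/\sqrt{k_n^2-k_g^2}+\cos\theta\,n_2$, so one reads off when the pair $\{\partial DS_{\gamma}/\partial s,\partial DS_{\gamma}/\partial\theta\}$ is linearly dependent; as in the hyperbolic case the resulting proportionality forces exactly
$$\tan\theta_0=\frac{k_gk_n'+k_g^2\tau_1-k_n^2\tau_1-k_nk_g'}{\sqrt{k_n^2-k_g^2}\,(k_n\tau_2+k_g\tau_g)}(s_0),$$
which is assertion~(1).

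For assertions~(2) and~(3) I would invoke Corollary~\ref{coro:discsupdesitter} to identify $DS_{\gamma}$ with the discriminant set $\mathcal{D}_{H^S_t}$, and parts~(4) and~(5) of Proposition~\ref{singhs} to translate the conditions into singularity types of $(h^S_t)_{v_0}$: the displayed equation together with $\rho(s_0)\neq0$ gives an $A_2$-singularity, while $\rho(s_0)=0$ and $\rho'(s_0)\neq0$ give an $A_3$-singularity. Since the parameter space $S^3_1$ has dimension $r=3$, Proposition~\ref{desdobramentohtS} guarantees versality (for the $A_3$ case under the generic condition $\lambda_0(s_0)\neq0$), and Theorem~\ref{teobruce1} then yields that $\mathcal{D}_{H^S_t}$ is diffeomorphic to $C\times\mathbb{R}^{r-2}=C\times\mathbb{R}$ (the cuspidal edge) in the $A_2$ case and to $SW\times\mathbb{R}^{r-3}=SW$ (the swallowtail) in the $A_3$ case, which is precisely assertions~(2) and~(3).

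The remaining assertions~(4) and~(5) concern the degenerate locus $\lambda_0(s_0)=0$, where $H^S_t$ fails to be versal and one passes to the augmented deformation $\widetilde{H}(s,v,u)=\langle t(s),v\rangle+u(s-s_0)^2$, which is versal by Proposition~\ref{prop:defnova2}. Following the recognition procedure of \cite{beaks}, I would compute $\varphi=(\partial^2 H^S_t/\partial s^2)\mid_{\mathcal{D}_{H^S_t}}$ and its Hessian at $(s_0,0)$ (note that $\lambda_0(s_0)=0$ forces $\theta_0=0$); as in the hyperbolic case the off-diagonal entries equal $\lambda_1(s_0)$ and the lower-right entry vanishes, so $\det\mathrm{Hess}(\varphi)(s_0,0)=-\lambda_1(s_0)^2\neq0$ when $\lambda_1(s_0)\neq0$. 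Lemma~7.7 of \cite{beaks} then shows that $H^S_t$ is $P$-$\mathcal{K}$-equivalent to $t^4\pm v_1^2t^2+v_2t+v_3$, and since the singular set $\{\varphi=0\}$ consists of two curves meeting transversally at $(s_0,0)$ the minus sign is selected, giving the cuspidal beaks and excluding the cuspidal lips.

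The main obstacle is purely computational and sign-theoretic: one must carry out the Frenet--Serret differentiations and the Hessian computation while tracking the sign change induced by replacing $k_g^2-k_n^2>0$ with $k_n^2-k_g^2>0$ and the hyperbolic functions with trigonometric ones, and in particular verify that the substitution $\cosh\theta\leftrightarrow\cos\theta$, $\sinh\theta\leftrightarrow\sin\theta$ does not disturb the transversality of the two branches of $\{\varphi=0\}$ that pins down the beaks-versus-lips dichotomy. Everything else is a direct transcription of the arguments of Section~\ref{sectionht}.
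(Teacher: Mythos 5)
Your proposal is correct and coincides with the paper's own route: the paper establishes Theorem \ref{teo:classificacao2} precisely by transcribing the proof of Theorem \ref{teo:classificacao} to the de Sitter setting, that is, identifying $DS_{\gamma}$ with $\mathcal{D}_{H^S_t}$ via Corollary \ref{coro:discsupdesitter}, reading off the $A_2$/$A_3$ conditions from Proposition \ref{singhs}, invoking the versality results (Propositions \ref{desdobramentohtS} and \ref{prop:defnova2}) together with Theorem \ref{teobruce1} for the cuspidal edge and swallowtail, and settling the beaks-versus-lips dichotomy through the same computation $\det\mathrm{Hess}(\varphi)(s_0,0)=-\lambda_1(s_0)^2\neq0$ and Lemma 7.7 of \cite{beaks}. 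The sign bookkeeping you flag (replacing $\cosh\theta,\sinh\theta,\sqrt{k_g^2-k_n^2}$ by $\cos\theta,\sin\theta,\sqrt{k_n^2-k_g^2}$) is exactly the only modification the paper makes, so nothing further is required.
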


In the next proposition we relate the curve $\gamma$ of the de Sitter surface with the invariant $\rho$ and a timelike slice surface. In this case, the singular locus of the de Sitter surface of $\gamma$ is a point.

\begin{prop}\label{pro:slice2}
Let $\gamma:I\rightarrow M$ be a  unit speed curve such that $k_g(s)\neq0$, $(k_n\tau_2+k_g\tau_g)(s)\neq0$ and $k_n^2(s)>k_g^2(s)$ for any $ s\in I$.  Let  $DS_{\gamma}(s,\theta(s))$ be the singular points of the de Sitter surface of $\gamma$. Then the following conditions are equivalent:
\begin{itemize}
\item[(1)] $DS_{\gamma}(s,\theta(s))$ is a constant spacelike vector;
\item[(2)] $\rho(s)\equiv 0$;
\item[(3)] there exist a spacelike vector $v$ and a real number $c$ such that  $Im(\gamma)\subset M\cap HP(v,c)$.
\end{itemize}
\end{prop}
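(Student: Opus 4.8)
The plan is to establish the equivalences $(1)\Leftrightarrow(2)$ and $(1)\Leftrightarrow(3)$ by directly adapting the argument of Proposition \ref{pro:slice1}, since the de Sitter case is formally the ``spacelike'' analogue of the ``timelike'' situation treated there. The only structural changes are that the hyperbolic trigonometric functions $\cosh,\sinh$ are replaced by $\cos,\sin$, the quantity $\sqrt{k_g^2-k_n^2}$ is replaced by $\sqrt{k_n^2-k_g^2}$ (reflecting the standing hypothesis $k_n^2>k_g^2$), and the target pseudo-sphere is $S^3_1$ rather than $H^3(-1)$, so that the constant vector produced is spacelike instead of timelike. Because the algebraic skeleton of the computation is identical, I expect to be able to reuse the invariant $\rho$ verbatim and most of the auxiliary coefficient functions with only the sign change in $k_g^2-k_n^2$.

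First I would compute $\dfrac{d}{ds}DS_{\gamma}(s,\theta(s))$ explicitly. Writing $DS_\gamma(s,\theta(s))$ out using $\tan\theta(s)=\dfrac{k_gk'_n+k_g^2\tau_1-k_n^2\tau_1-k_nk'_g}{\sqrt{k_n^2-k_g^2}(k_n\tau_2+k_g\tau_g)}(s)$, differentiating, and substituting the Frenet--Serret type formulae together with the expression for $\theta'(s)$, I expect the derivative to factor as a scalar multiple of $\rho(s)$ times a fixed combination $a\,n_\gamma+b\,n_1+c\,n_2$, exactly as in the hyperbolic case. This immediately yields $(1)\Leftrightarrow(2)$: the singular locus $DS_{\gamma}(s,\theta(s))$ is a constant vector precisely when $\rho\equiv 0$. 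That the constant vector is \emph{spacelike} follows from the defining relation $\langle v,v\rangle=1$ on $S^3_1$, which is preserved along the curve.

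For $(1)\Rightarrow(3)$, assuming $DS_\gamma(s,\theta(s))\equiv v$ constant, I would set $g(s)=\langle \gamma(s),v\rangle$ and show $g'(s)\equiv 0$, so that $g$ is constant, say $g\equiv c$; this exhibits $\mathrm{Im}(\gamma)\subset M\cap HP(v,c)$ as a timelike slice (timelike slice because $v$ is spacelike). As in Proposition \ref{pro:slice1}, I would write $g'(s)=g_1\langle\gamma,n_\gamma\rangle+g_2\langle\gamma,n_1\rangle+g_3\langle\gamma,n_2\rangle$ using the frame formulae, and then verify that each coefficient $g_i$ vanishes identically by reorganizing the corresponding expressions $A(s),B(s),C(s)$. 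For the converse $(3)\Rightarrow(1)$, if $\langle\gamma(s),v\rangle=c$ for a fixed spacelike $v$, then differentiating four times gives $(h^S_t)_v(s)=(h^S_t)'_v(s)=(h^S_t)''_v(s)=(h^S_t)'''_v(s)=0$ for all $s$, so Proposition \ref{singhs} forces $v=DS_\gamma(s,\theta(s))$ and $\rho(s)=0$ for all $s$, giving $(1)$.

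The main obstacle will be the same one the authors flag for Proposition \ref{pro:slice1}: the ``long calculations'' verifying $A(s)=B(s)=C(s)=0$ identically. The algebra is heavy, but it is mechanical once the derivative of $DS_\gamma(s,\theta(s))$ has been reduced to the $\rho$-factored form, and there are no genuinely new ideas beyond the sign bookkeeping $k_g^2-k_n^2\rightsquigarrow k_n^2-k_g^2$. Since the excerpt explicitly states the arguments are analogous to Section \ref{sectionht}, I would present the proof by pointing to Proposition \ref{pro:slice1} and indicating only the points where the de Sitter geometry (spacelike $v$, timelike slice, $\cos/\sin$) differs, rather than reproducing the full computation.
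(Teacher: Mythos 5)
Your proposal is correct and coincides with the paper's own treatment: the paper omits the proof of Proposition \ref{pro:slice2} precisely on the grounds that it is the $\cos/\sin$, $\sqrt{k_n^2-k_g^2}$, spacelike-vector analogue of the proof of Proposition \ref{pro:slice1}, which is exactly the adaptation you describe (the $\rho$-factored derivative of the singular locus for $(1)\Leftrightarrow(2)$, the vanishing coefficients $A,B,C$ for $(1)\Rightarrow(3)$, and Proposition \ref{singhs} for $(3)\Rightarrow(1)$).
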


In the previous  result the invariant $\rho\equiv0$ means that the curve $\gamma$ is a part of a timelike  slice surface. For the next results we assume that $\rho\not\equiv0$, that is $\gamma$ is not part of any timelike slice surface $M\cap HP(v,c)$.
\begin{prop}\label{prop:contato2}
Let $\gamma:I\rightarrow M$ be a  unit speed curve  such that $k_g(s)\neq0$, $(k_n\tau_2+k_g\tau_g)(s)\neq0$ and $k_n^2(s)>k_g^2(s)$ for any $s\in I$. Let $v_0=DS_{\gamma}(s_0,\theta_0)$ and $c=\langle \gamma(s_0),v_0\rangle$. Then we have the following:
\begin{itemize}
\item[(1)] $\gamma$ and the timelike slice surface $M\cap HP(v_0,c)$ have contact of at least order 3 at $s_0$ if and only if ${({h^{S}_t})}_{v_0}$ has $A_k$-singularity at $s_0$, $k\geq2$. Furthermore, if  $\gamma$ and the timelike slice surface $M\cap HP(v_0,c)$ have contact of  order exactly 3 at $s_0$, then the  de Sitter curve $DC_{\gamma}$ of $\gamma$ is, at $s_0$, locally diffeomorphic to a line at  $s_0$.
\item[(2)] $\gamma$ and the timelike slice surface $M\cap HP(v_0,c)$  have contact of order 4 at $s_0$ if and only if $ {({h^{S}_t})}_{v_0}$ has $A_{3}$-singularity at $s_0$. In this case, if $\lambda_0(s_0)\neq0$, then the  de Sitter curve $DC_{\gamma}$ of $\gamma$ is, at $s_0$, locally diffeomorphic to the $(2,3,4)$-\emph{cusp} $C(2,3,4)$.
\end{itemize}
\end{prop}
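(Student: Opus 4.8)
The plan is to follow the same strategy as in the proof of Proposition \ref{prop:contato1}, exploiting the fact that the de Sitter surface plays, relative to the timelike slices, exactly the role that the hyperbolic surface played relative to the spacelike slices. First I would introduce the function $D_{v_0}:M\rightarrow\mathbb{R}$ given by $D_{v_0}(x)=\langle x,v_0\rangle-c$; since $v_0=DS_{\gamma}(s_0,\theta_0)\in S^3_1$ is a spacelike vector, the level set $D_{v_0}^{-1}(0)=M\cap HP(v_0,c)$ is precisely the timelike slice surface in the statement, and by the choice of $c$ we have $\gamma(s_0)\in D_{v_0}^{-1}(0)$.

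Next I would set $g(s)=D_{v_0}\circ\gamma(s)=\langle\gamma(s),v_0\rangle-c$ and compute $g'(s)=\langle t(s),v_0\rangle=(h^S_t)_{v_0}(s)$. The key bookkeeping point is the resulting index shift: $g$ has an $A_k$-singularity at $s_0$ exactly when $(h^S_t)_{v_0}$ has an $A_{k-1}$-singularity there. Hence contact of order at least $3$ between $\gamma$ and the slice, i.e. $g(s_0)=g'(s_0)=g''(s_0)=g'''(s_0)=0$, is equivalent to $(h^S_t)_{v_0}(s_0)=(h^S_t)_{v_0}'(s_0)=(h^S_t)_{v_0}''(s_0)=0$, that is, to $(h^S_t)_{v_0}$ having an $A_k$-singularity with $k\geq2$. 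Invoking Proposition \ref{singhs}(3) then yields the explicit description of $v_0$ together with $\tan\theta_0=\frac{k_gk'_n+k_g^2\tau_1-k_n^2\tau_1-k_nk'_g}{(k_n\tau_2+k_g\tau_g)\sqrt{k_n^2-k_g^2}}(s_0)$, establishing the equivalence in (1).

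For the geometric conclusions I would read off the singularity type and then appeal to Theorem \ref{teo:classificacao2}. Contact of order exactly $3$ means $g$ has an $A_3$-singularity, i.e. $(h^S_t)_{v_0}$ has an $A_2$-singularity; by Proposition \ref{singhs}(4) this corresponds to $\rho(s_0)\neq0$, so Theorem \ref{teo:classificacao2}(2) makes $DS_{\gamma}$ a cuspidal edge at $(s_0,\theta_0)$, whose singular locus is diffeomorphic to a line. Since the de Sitter curve $DC_{\gamma}$ is exactly this singular locus by Theorem \ref{teo:classificacao2}(1), part (1) follows. Similarly, contact of order $4$ corresponds to an $A_4$-singularity of $g$, hence an $A_3$-singularity of $(h^S_t)_{v_0}$, characterized by $\rho(s_0)=0$ and $\rho'(s_0)\neq0$ via Proposition \ref{singhs}(4)--(5); adding the hypothesis $\lambda_0(s_0)\neq0$ and Theorem \ref{teo:classificacao2}(3) makes $DS_{\gamma}$ a swallowtail, whose singular locus is the $(2,3,4)$-cusp $C(2,3,4)$, giving (2).

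The routine computations, namely the Frenet-based expressions for the successive derivatives of $(h^S_t)_{v_0}$, are already encoded in Proposition \ref{singhs}, so there is no heavy calculation to redo here. The main thing to get right is the off-by-one bookkeeping between the contact order of $\gamma$ with the slice (the order of vanishing of $g$) and the $A_k$-type of $(h^S_t)_{v_0}$, together with recalling the standard descriptions of the singular loci of the cuspidal edge (a smooth curve, locally a line) and of the swallowtail (the $(2,3,4)$-cusp). These local normal forms are what let me transfer the classification of the de Sitter surface in Theorem \ref{teo:classificacao2} into the stated shape of the de Sitter curve $DC_{\gamma}$.
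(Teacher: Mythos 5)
Your proof is correct and takes essentially the same approach as the paper: the paper actually omits a separate proof of Proposition \ref{prop:contato2} (Section \ref{sectionhs} is declared analogous to Section \ref{sectionht}), and your argument is precisely the expected adaptation of the proof of Proposition \ref{prop:contato1} — the function $D_{v_0}(x)=\langle x,v_0\rangle-c$, the off-by-one bookkeeping between the contact order of $g$ and the $A_k$-type of $(h^S_t)_{v_0}$, and the transfer through Proposition \ref{singhs} and Theorem \ref{teo:classificacao2} to the singular loci of the cuspidal edge and the swallowtail.
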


\section{Examples}\label{sec:exas3}
In this section, we consider two examples of curves on spacelike hypersurface  $M$ in $\mathbb{R}^4_1$. One of them is $M=\mathbb{R}^3$ another is  $H^3(-1)$.

\begin{ex}\rm
We consider $M=\mathbb{R}^3$, $\gamma:I\rightarrow \mathbb{R}^3$,  the Frenet  frame $\{t,n,b\}$  and the Frenet-Serret formulae as in Example \ref{ex:1}.
$$
\left\{
  \begin{aligned}
     e_{0}'(s) &=0, \\
     t'(s) & =k(s)\,n(s),\\
 n'(s) &= -k(s)\,t(s)+\tau(s)\,b(s),\\
  b'(s) &= -\tau(s)\,n(s).
  \end{aligned}
\right.$$

In this case, the de Sitter surface of  $\gamma$ in $S^3_1\subset \mathbb{R}^4_1$ can not be  defined.
\end{ex}

\begin{ex}\rm
We consider $M=H^3(-1)$, $\gamma:I\rightarrow H^3(-1)$ and the pseudo orthonormal frame $\{\gamma, t,n_1,n_2\}$ as in Example \ref{ex:2}.
$$
\left\{
  \begin{aligned}
     \gamma'(s) &=t(s), \\
     t'(s) & =\gamma(s)+k_h(s)\,n_1(s),\\
 n_1'(s) &= -k_h(s)\,t(s)+\tau_h(s)\,n_2(s),\\
  n_2'(s) &= -\tau_h(s)\,n_1(s).
  \end{aligned}
\right.$$
Therefore, for $k_h^2(s)< 1$ the de Sitter surface of $\gamma$ is given by
 $$DS_{\gamma}(s,\theta)=\dfrac{\cos\theta }{\sqrt{1-k_h^2(s)}}(k_h(s)\gamma(s)+n_1(s))+\sin \theta n_2(s).$$
 It follows that the de Sitter surface is precisely the de Sitter  focal surface of  $\gamma$ given in \cite{izumyiafs}.
\end{ex}

{\small
\par\noindent
Shyuichi Izumiya, Department of Mathematics, Hokkaido University, Sapporo 060-0810, Japan
\par\noindent
e-mail:{\tt izumiya@math.sci.hokudai.ac.jp}
\bigskip
\par\noindent
Ana Claudia Nabarro, Andrea de Jesus Sacramento,  Departamento de Matem\'{a}tica,
ICMC Universidade de S\~{a}o Paulo, Campus de S\~{a}o Carlos, Caixa Postal 668, CEP 13560-970, S\~{a}o Carlos-SP, Brazil
\par\noindent
e-mail:{\tt anaclana@icmc.usp.br }

\par\noindent e-mail:{\tt anddyunesp@yahoo.com.br}

}



\end{document}